\newtheorem{theorem}{Theorem}[section]
\newtheorem{conjecture}[theorem]{Conjecture}
\newtheorem{proposition}[theorem]{Proposition}
\numberwithin{equation}{section}
\newtheorem{claim}{}[theorem]
\newcommand{\Z}{\mathbb Z}
\newcommand{\R}{\mathbb R}
\newcommand{\X}{\Omega}
\newcommand{\und}{\;\mbox{ and }\;}
\newcommand{\nn}{\nonumber}
\newcommand{\be}{\begin{equation}}
\newcommand{\ee}{\end{equation}}
\title{Pollard's theorem in general abelian groups}
\author{David J.~Grynkiewicz}
\address[D.~J.~Grynkiewicz]{Department of Mathematical Sciences, University of Memphis, Memphis, TN 38152, USA}
\email{diambri@hotmail.com}
\thanks{}
\author{Runze Wang}
\address[R.~Wang]{Department of Mathematical Sciences, University of Memphis, Memphis, TN 38152, USA}
\email{runze.w@hotmail.com}
\thanks{}
\subjclass[2020]{11P70, 11B13}
\begin{document}

\sloppy

\begin{abstract}
    We make further progress towards a Kneser-type generalization of Pollard's Theorem to general abelian groups. For two sets $A$ and $B$ in an abelian group $G$, the \emph{$t$-popular sumset} of $A$ and $B$, denoted by $A+_t B$, is the set of elements in $G$ each with at least $t$ representations of the form $a+b$, where $a\in A$ and $b\in B$. For $|A|,\, |B|\ge t\geq 2$, we prove that if
    \begin{align*}
        \sum_{i=1}^t |A+_i B|< t|A|+t|B|-\frac{4}{3}t^2+\frac{2}{3}t,
    \end{align*}
    then there exist $A'\subseteq A$ and $B'\subseteq B$ with $|A\setminus A'|+|B\setminus B'|\le t-1$, $A'+_t B'=A'+B'=A+_t B$, and $ \sum_{i=1}^t |A+_i B|\ge t|A|+t|B|-t|H|,$ where $H$ is the stabilizer of $A'+B'=A+_t B$. Our result improves the main quadratic term in the previous best bound from $-2t^2$ to $-\frac{4}{3}t^2$.
\end{abstract}
\keywords{Kneser's Theorem; Pollard's Theorem; sumset}

\maketitle

\section{Introduction}

\subsection{Basic Notation and Concepts}

Let $(G,\ +)$ be an abelian group, and let $A,\, B\subseteq G$ be subsets. The \emph{sumset} of $A$ and $B$ is defined by
\begin{align*}
    A+B:=\{a+b: a\in A,\ b\in B\}.
\end{align*}
Similarly, for an element $g\in G$, \ $g+A:=\{g+a: a\in A\}$.
We use
\begin{align*}
\mathsf  r_{A,\,B}(g):=|\{(a,\ b)\in A\times B: a+b=g\}|
\end{align*}
to count the number of representations of $g$ of the form $a+b$, with $a\in A$ and $b\in B$. It is easily noted  that $\mathsf r_{A,\,B}(g)=|(g-A)\cap B|=|(g-B)\cap A|$.
Let $t$ be a positive integer. The \emph{$t$-popular sumset} of $A$ and $B$ is defined to be
\begin{align*}
    A+_t B:=\{g\in G:\;\mathsf  r_{A,\,B}(g)\ge t\}.
\end{align*}
In particular, if we take $t=1$, then $A+_1 B$ is just $A+B$.
The \emph{stabilizer} of $A$ is defined to be
\begin{align*}
    \mathsf{H}(A):=\{e\in G: e+A=A\}\leq G,
\end{align*}
which is easily seen to be a subgroup of $G$. If $|\mathsf{H}(A)|\ge 2$, then $A$ is said to be \emph{periodic}; otherwise $A$ is said to be \emph{aperiodic}. If $H\leq \mathsf H(A)$, that is, $A$ is a union of $H$-cosets, then $A$ is said to be \emph{$H$-periodic}.
For $a,\, b\in \Z$, let $[a,\, b]:=\{x\in \Z: \; a\leq x\leq b\}$ denote a discrete interval.

The following basic lower bound on $\mathsf r_{A,\,B}(g)$ will be referred to as the \emph{Pigeonhole Bound} (as it is an immediate consequence of the Pigeonhole Principle), and it will be used multiple times in the proof of the main result (Theorem \ref{new}) in this paper.

\begin{proposition}[Pigeonhole Bound {\cite[p.~57]{Gry2}}] \label{pigeon}
    Let $G$ be an abelian group, and let $A,\, B\subseteq G$ be finite subsets. Then $
       \mathsf  r_{A,\,B}(g)\ge |A|+|B|-|G|$ for any $g\in G$.
\end{proposition}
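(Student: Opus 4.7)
The statement to prove is the Pigeonhole Bound, which is a one-line consequence of basic inclusion-exclusion once the representation function is rewritten as the cardinality of an intersection.

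The plan is to invoke the identity $\mathsf r_{A,\,B}(g) = |(g - A) \cap B|$ already observed in the excerpt, and then apply the standard inequality $|X \cap Y| \ge |X| + |Y| - |G|$ valid for any two subsets $X,\, Y$ of a finite ambient group $G$. Concretely, I would set $X := g - A$ and $Y := B$, note that translation is a bijection so $|g - A| = |A|$, and then write
\begin{align*}
    \mathsf r_{A,\,B}(g) \;=\; |(g-A)\cap B| \;=\; |g-A| + |B| - |(g-A)\cup B| \;\ge\; |A| + |B| - |G|,
\end{align*}
where the last inequality uses $(g-A)\cup B \subseteq G$, hence $|(g-A)\cup B| \le |G|$.

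One small point worth flagging: the proposition is stated without an explicit hypothesis that $G$ be finite, but the bound is only nontrivial when $|G|$ is finite (otherwise the right-hand side is vacuously $-\infty$ or undefined in the intended sense). So in the write-up I would either implicitly assume $|G| < \infty$ for the bound to carry content, or note that when $G$ is infinite the inequality holds trivially under the convention $|A|+|B|-|G| = -\infty \le \mathsf r_{A,\,B}(g)$.

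There is essentially no obstacle here; the only ``step'' is recognizing the translation invariance $|g-A|=|A|$ and the elementary bound on the intersection of two subsets of a finite set. The proof is two lines and requires no auxiliary constructions, induction, or case analysis.
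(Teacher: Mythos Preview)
Your proof is correct and matches the paper's treatment: the paper does not give an explicit proof but simply remarks that the bound ``is an immediate consequence of the Pigeonhole Principle'' and cites \cite[p.~57]{Gry2}. Your inclusion-exclusion argument is precisely that immediate consequence spelled out.
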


\subsection{Classical Theorems}

The well-known Cauchy-Davenport Theorem  gives a lower bound on the size of $A+B$ when the group $G$ is taken to be a prime order group $C_p:=\Z/p\Z$, and is among the oldest results in this area of Combinatorial Number Theory, dating to 1813.

\begin{theorem}[Cauchy-Davenport Theorem  \cite{Cau, Dav, Gry2}]
    Let $p$ be a prime number, let $G=C_p$, and let $A,\, B\subseteq G$ be subsets with $|A|,\,|B|\geq 1$. Then
    \begin{align*}
        |A+B|\ge \min\{p,\ |A|+|B|-1\}.
    \end{align*}
\end{theorem}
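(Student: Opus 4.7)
The plan is to proceed by induction on $|B|$, exploiting the primality of $p$ through the Dyson $e$-transform. The base case $|B|=1$ is immediate, since then $A+B$ is a translate of $A$, giving $|A+B|=|A|=|A|+|B|-1$. For the inductive step, assume $|B|\ge 2$. I may assume $|A+B|<p$ (else the bound holds trivially) and, in view of Proposition~\ref{pigeon} applied to $G=C_p$, also $|A|+|B|\le p$ (otherwise every element has a representation and $A+B=C_p$); the goal reduces to showing $|A+B|\ge |A|+|B|-1$.

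For the inductive step I would invoke the Dyson $e$-transform: for each $e\in G$, set
\begin{align*}
A_e:=A\cup (B+e)\und B_e:=B\cap (A-e).
\end{align*}
A routine inclusion--exclusion check yields $|A_e|+|B_e|=|A|+|B|$, while $A_e+B_e\subseteq A+B$. If for some $e$ one has $\emptyset\ne B_e\subsetneq B$, then $|B_e|<|B|$ and $|A_e|\ge |A|\ge 1$, so the inductive hypothesis applied to $(A_e,B_e)$ gives
\begin{align*}
|A+B|\ge |A_e+B_e|\ge |A_e|+|B_e|-1 = |A|+|B|-1,
\end{align*}
as desired.

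The main obstacle is therefore the case in which no such $e$ exists, i.e., every $B_e$ is either empty or equal to $B$. After translating so that $0\in A\cap B$, the choice $e=a$ for any $a\in A$ gives $0\in B_a$, hence $B_a=B$ and so $B+a\subseteq A$. Consequently $A+B\subseteq A$, and since $0\in B$ the reverse inclusion also holds, forcing $A+B=A$. This means every $b\in B$ lies in $\mathsf{H}(A)$, so $B\subseteq \mathsf{H}(A)\le C_p$. Because $|B|\ge 2$ and $p$ is prime, $\mathsf{H}(A)$ must equal $C_p$, yielding $A=C_p$ and contradicting $|A|\le |A+B|<p$. This coset-union dichotomy is the step where the prime order hypothesis is essential: in a general abelian group $\mathsf{H}(A)$ could be any nontrivial proper subgroup, and one instead obtains Kneser's inequality.
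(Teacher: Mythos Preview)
Your proof is correct and is the classical Dyson $e$-transform argument (essentially Davenport's original proof). Note, however, that the paper does not actually supply its own proof of the Cauchy--Davenport Theorem: it is stated in the introduction as a background result with citations \cite{Cau, Dav, Gry2}, so there is nothing in the paper to compare against. One minor stylistic point: the translation ``so that $0\in A\cap B$'' should really be performed at the outset (translating $A$ and $B$ independently, which preserves $|A+B|$), rather than after the casework on the $B_e$ has begun; the argument is unaffected, but the phrasing as written could suggest the translation is being applied mid-proof.
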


Kneser's Theorem  generalizes the Cauchy-Davenport Theorem to general abelian groups.

\begin{theorem}[Kneser's Theorem \cite{Kne2,Gry2}]\label{kneserthm}
    Let $G$ be an abelian group, and let $A,\, B\subseteq G$ be finite subsets with $|A|,\,|B|\geq 1$. Then
    \begin{align}\label{kt-bound}
        |A+B|\ge |A+H|+|B+H|-|H|,
    \end{align}
    where $H:=\mathsf{H}(A+B)$.
\end{theorem}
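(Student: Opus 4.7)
The plan is to prove Kneser's Theorem in two stages: first reduce to the case that $A+B$ is aperiodic by passing to the quotient $G/\mathsf H(A+B)$, then establish the aperiodic case by an extremal argument driven by the Dyson $e$-transform.

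For the reduction, set $H:=\mathsf H(A+B)$ and let $\pi\colon G\to G/H$ be the quotient map. Since $A+B$ is a union of $H$-cosets, $|\pi(A+B)|=|A+B|/|H|$, and likewise $|\pi(A)|=|A+H|/|H|$ and $|\pi(B)|=|B+H|/|H|$. A direct check gives $\pi(A+B)=\pi(A)+\pi(B)$, and this sumset is aperiodic in $G/H$: any nontrivial stabilizer in the quotient pulls back to an element of $G\setminus H$ fixing $A+B$, contradicting the definition of $H$. Multiplying by $|H|$, the inequality \eqref{kt-bound} for $(A,B)$ then follows from the special case $|C+D|\ge|C|+|D|-1$ (valid when $\mathsf H(C+D)=\{0\}$) applied to $C=\pi(A)$ and $D=\pi(B)$.

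For the aperiodic case, the key tool is the \emph{Dyson $e$-transform}: for $e\in G$, set $A_e:=A\cup(B+e)$ and $B_e:=B\cap(A-e)$. Two identities drive everything. First, $A_e+B_e\subseteq A+B$: any $y\in B_e$ satisfies $y+e\in A$, so $x\in A_e$ gives $x+y\in A+B$ both when $x\in A$ and when $x=b+e\in B+e$, via $x+y=b+(y+e)$. Second, $z\mapsto z-e$ is a bijection $A\cap(B+e)\to B\cap(A-e)$, so inclusion-exclusion yields $|A_e|+|B_e|=|A|+|B|$. Thus every Dyson transform preserves $|A|+|B|$ while not enlarging the sumset; the case $\min(|A|,|B|)=1$ is trivial since then $|A+B|=\max(|A|,|B|)=|A|+|B|-1$.

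For the main argument, suppose for contradiction that $|A+B|<|A|+|B|-1$ while $\mathsf H(A+B)=\{0\}$. Among all pairs $(A',B')$ obtainable from $(A,B)$ by iterated Dyson transforms---so $A'+B'\subseteq A+B$ and $|A'|+|B'|=|A|+|B|$---choose one minimizing $|B'|$. Minimality rules out any strictly shrinking Dyson transform on $(A',B')$: for each $e\in G$, either $B'_e=\emptyset$ or $B'_e=B'$, equivalently $B'+e\subseteq A'$ whenever $(B'+e)\cap A'\neq\emptyset$. A short manipulation then yields $(A'-B')+B'\subseteq A'$, hence $B'-B'\subseteq\mathsf H(A')$, so $B'$ lies in a single coset of $K:=\mathsf H(A')$ and $A'+B'$ is $K$-periodic with $|A'+B'|=|A'|$. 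The defect hypothesis forces $|B'|\ge 2$ (otherwise $|A+B|\ge|A'|=|A|+|B|-1$), and hence $K\neq\{0\}$. The main obstacle is the final periodicity upgrade: one has located a $K$-periodic subset $A'+B'\subseteq A+B$ but must establish that $A+B$ itself is $K$-periodic in order to conflict with $\mathsf H(A+B)=\{0\}$ and conclude $K=\{0\}$. Executing this stabilizer-transfer step---a covering argument exploiting both the minimality of $|B'|$ and the fact that $A'+B'$ already accounts for a sizable portion of $A+B$---is the technical heart of Kneser's original proof and the step I expect to demand the most care.
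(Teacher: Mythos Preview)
The paper does not prove Kneser's Theorem; it is cited as a classical result from \cite{Kne2,Gry2} and used as a tool. So your proposal must be assessed on its own.

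Your reduction to the aperiodic case via the quotient $G/H$ is correct and standard, and the Dyson $e$-transform identities you record are right. The genuine gap is exactly the step you flag as unfinished. After iterating Dyson transforms to reach a pair $(A',B')$ with $|B'|$ minimal (and $B'\neq\emptyset$), you correctly obtain $B'-B'\subseteq K:=\mathsf H(A')$, hence $A'+B'$ is $K$-periodic with $|K|\ge 2$. But iterated transforms may strictly shrink the sumset, so in general $A'+B'\subsetneq A+B$, and nothing in your setup forces $K\le\mathsf H(A+B)$. Concretely, $|A'+B'|=|A'|=|A|+|B|-|B'|$ together with $|A+B|<|A|+|B|-1$ gives only $|(A+B)\setminus(A'+B')|<|B'|-1\le |K|-1$: thus $A+B$ is the $K$-periodic set $A'+B'$ plus fewer than $|K|-1$ extra points. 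If there is even one extra point, those extras cannot fill a full $K$-coset, so $A+B$ is \emph{not} $K$-periodic---the opposite of what you need. To force the contradiction you would need $A'+B'=A+B$, i.e.\ $|B'|=|A|+|B|-|A+B|$, whereas your minimization only yields $|B'|\ge |A|+|B|-|A+B|$.

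This is not a bookkeeping detail. What you have written up to that point is the Dyson-transform proof of Cauchy--Davenport, where $|K|\ge 2$ already contradicts the subgroup structure of $C_p$; the passage to general abelian groups is precisely where Kneser's Theorem becomes hard. The proofs in the cited references do not minimize $|B'|$ freely over all iterated transforms; they arrange the induction so that the sumset---and hence its stabilizer---is controlled at each step, and they handle separately the obstruction arising when no sumset-preserving transform strictly shrinks $B$. Absent such a mechanism, the ``covering argument'' you allude to cannot close the gap as stated.
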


 Kneser's Theorem can be equivalently stated with the bound \eqref{kt-bound} replaced by the bound $|A+B|\geq |A|+|B|-|H|$, as a short, couple lines argument derives \eqref{kt-bound} from this seemingly weaker bound (see \cite{Gry2}).
Kneser's Theorem also immediately implies (via Proposition \ref{pigeon}) the following lower bound  on $\mathsf r_{A,\,B}(g)$, which predates Kneser's Theorem.

\begin{proposition}[{\cite[p.~57]{Gry2}}] \label{multi}
    Let $G$ be an abelian group, and let $A,\, B\subseteq G$ be finite subsets with $|A|,\,|B|\geq 1$. Then $\mathsf r_{A,B}(g)\geq |A|+|B|-|A+B|$ for any $g\in A+B$.
\end{proposition}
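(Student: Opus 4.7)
The plan is to combine Kneser's Theorem with the Pigeonhole Bound (Proposition \ref{pigeon}) by localizing to a single coset of the stabilizer $H := \mathsf{H}(A+B)$. Since $A+B$ is nonempty, $H$-periodic, and finite, any coset $g+H \subseteq A+B$ is finite, and hence $H$ itself is finite. Given $g \in A+B$, I would fix a decomposition $g = a_0 + b_0$ with $a_0 \in A$ and $b_0 \in B$, and introduce the ``local'' pieces $A_0 := A \cap (a_0 + H)$ and $B_0 := B \cap (b_0 + H)$, which are nonempty (they contain $a_0$ and $b_0$).

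After translating, $A_0 - a_0$ and $B_0 - b_0$ become subsets of the finite subgroup $H$, so the Pigeonhole Bound applied inside $H$ gives
\[
\mathsf{r}_{A,B}(g) \;\ge\; \mathsf{r}_{A_0, B_0}(g) \;=\; \mathsf{r}_{A_0 - a_0,\, B_0 - b_0}(0) \;\ge\; |A_0| + |B_0| - |H|,
\]
where the first inequality uses $A_0 \subseteq A$ and $B_0 \subseteq B$. To obtain the target bound it now suffices to show $|A_0| + |B_0| - |H| \ge |A| + |B| - |A+B|$, and here Kneser's Theorem (Theorem \ref{kneserthm}) supplies the inequality $|A+B| \ge |A+H| + |B+H| - |H|$. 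Substituting this and rearranging reduces the task to proving
\[
(|A+H| - |A|) + (|B+H| - |B|) \;\ge\; (|H| - |A_0|) + (|H| - |B_0|).
\]

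The final step is a straightforward coset-counting observation: writing $A+H$ as a disjoint union of the $H$-cosets $\kappa$ that meet $A$, one has $|A+H| - |A| = \sum_{\kappa}(|H| - |A \cap \kappa|)$, a sum of nonnegative terms, of which the summand corresponding to $\kappa = a_0 + H$ is exactly $|H| - |A_0|$. Hence $|H| - |A_0| \le |A+H| - |A|$, and the analogous estimate for $B$ holds by symmetry; adding the two yields the required inequality. I do not anticipate any real obstacle in this argument: the only point requiring care is choosing $a_0, b_0$ from a genuine decomposition of $g$ in $A \times B$ (so that $A_0$ and $B_0$ are nonempty), and this is guaranteed by the hypothesis $g \in A+B$.
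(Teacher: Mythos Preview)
Your proof is correct and follows precisely the route the paper indicates: combine Kneser's Theorem with the Pigeonhole Bound (Proposition~\ref{pigeon}) applied inside a single $H$-coset, where $H=\mathsf H(A+B)$. The paper gives no further details beyond saying the result ``immediately'' follows from Kneser via Proposition~\ref{pigeon}, and your localization to the slices $A_0$, $B_0$ together with the coset-counting estimate $|H|-|A_0|\le |A+H|-|A|$ is exactly the intended unpacking of that remark.
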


The Cauchy-Davenport Theorem involves ordinary sumsets in prime order groups.  Kneser's Theorem generalizes the Cauchy-Davenport Theorem giving a result valid for general abelian groups rather than prime order groups. Pollard generalized the Cauchy-Davenport Theorem giving a result for $t$-popular sums rather than ordinary sumsets.

\begin{theorem}[Pollard's Theorem \cite{Pol, Gry2}]
    Let $t$ be a positive integer, let $p$ be a prime number, let $G=C_p$, and let $A,\, B\subseteq G$ be subsets with $|A|,\,|B|\geq t$. Then
    \begin{align}\label{pollard-bound}
        \sum_{i=1}^t|A+_i B|\ge  \min\{tp,\ t|A|+t|B|-t^2\}.
    \end{align}
\end{theorem}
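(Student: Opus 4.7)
I would argue by induction on $|A|+|B|$, with base case $|A|=|B|=t$. In the base case the bound reduces to an equality: since $\mathsf r_{A,B}(g)\leq\min(|A|,|B|)=t$ for every $g\in G$, we have
\[
\sum_{i=1}^t|A+_i B| \;=\; \sum_{g\in G}\min\bigl(\mathsf r_{A,B}(g),\,t\bigr) \;=\; \sum_{g\in G}\mathsf r_{A,B}(g) \;=\; |A||B| \;=\; t^2,
\]
which matches $t|A|+t|B|-t^2=t^2$. The ``super-$p$'' regime $|A|+|B|\ge p+t$ is also disposed of immediately: the Pigeonhole Bound (Proposition \ref{pigeon}) gives $\mathsf r_{A,B}(g)\ge|A|+|B|-p\ge t$ for every $g\in C_p$, so $A+_i B=C_p$ for all $i\le t$ and the total is $tp$.

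For the inductive step assume $|A|+|B|>2t$ and $|A|+|B|\le p+t-1$. Without loss of generality $|A|>t$; try to peel off some $a_0\in A$. Writing $A^*=A\setminus\{a_0\}$, the identity $\mathsf r_{A,B}(g)=\mathsf r_{A^*,B}(g)+\mathbf 1[g-a_0\in B]$ yields, via the capped-sum formulation $\sum_{i=1}^t|A+_i B|=\sum_g\min(\mathsf r_{A,B}(g),t)$, the telescoping relation
\[
\sum_{i=1}^t|A+_i B|\;-\;\sum_{i=1}^t|A^*+_i B|\;=\;\bigl|\{b\in B:\ \mathsf r_{A^*,B}(a_0+b)<t\}\bigr|.
\]
Since the induction hypothesis applied to $(A^*,B)$ supplies $\sum_{i=1}^t|A^*+_i B|\ge t|A|+t|B|-t^2-t$, the induction closes \emph{provided} one can locate some $a_0\in A$ for which the right-hand side is at least $t$, i.e.\ at least $t$ elements $b\in B$ satisfy $\mathsf r_{A^*,B}(a_0+b)<t$.

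The principal obstacle is the \emph{saturated} case in which every $a_0\in A$ fails this condition, i.e.\ for every $a_0\in A$ all but fewer than $t$ elements of $a_0+B$ lie inside $A^*+_t B$. My plan to handle this case is to reformulate the problem via the dual identity
\[
\sum_{i=1}^t|A+_i B| \;=\; |A||B|\;-\;\sum_{g\in G}\bigl(\mathsf r_{A,B}(g)-t\bigr)^+,
\]
which transforms Pollard's bound (in the sub-$p$ regime) into the inequality $\sum_g(\mathsf r_{A,B}(g)-t)^+\le(|A|-t)(|B|-t)$, and then to apply an iterated Dyson $e$-transform $(A,B)\mapsto(A\cup(B+e),\,B\cap(A-e))$. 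Each such step preserves $|A|+|B|$ and the containment of the sumset, and a short verification shows it cannot increase $\sum_g(\mathsf r_{A,B}(g)-t)^+$. Choosing $e$ at each stage to reduce a suitable potential (for instance the length of the shortest arithmetic progression containing $A\cup B$) and using that $C_p$ has no nontrivial subgroups, the iteration terminates with $A$ and $B$ arithmetic progressions of common difference, on which the bound can be verified by direct enumeration; the monotonicity of the target quantity then transfers the bound back to the original $(A,B)$.

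The most delicate point will be the verification that the Dyson $e$-transform does not increase $\sum_g(\mathsf r_{A,B}(g)-t)^+$: one must match each representation $(a',b')\in(A\cup(B+e))\times(B\cap(A-e))$ of a given $g$ with a distinct representation in $A\times B$, and argue that any excess beyond $t$ representations in the transformed pair injects into the excess of the original pair. Once this monotonicity is in hand, the saturated case and the peeling argument together close the induction.
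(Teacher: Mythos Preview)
The paper does not give its own proof of Pollard's Theorem; it is quoted as a classical result with references \cite{Pol,Gry2}. So there is no argument in the paper to compare against, and I assess your proposal on its own terms and against the standard proof.

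There is a genuine direction error in your treatment of the saturated case. You correctly observe (and the computation preceding \eqref{azbz2} in the paper confirms) that the Dyson $e$-transform satisfies $\mathsf r_{A',B'}(g)\le \mathsf r_{A,B}(g)$ pointwise, whence $\sum_g(\mathsf r_{A',B'}(g)-t)^+\le\sum_g(\mathsf r_{A,B}(g)-t)^+$. But this monotonicity is useless for the \emph{upper} bound $\sum_g(\mathsf r_{A,B}(g)-t)^+\le(|A|-t)(|B|-t)$ you are trying to establish: after verifying the inequality at the terminal arithmetic-progression pair $(A',B')$, all you can deduce about the original pair is that $\sum_g(\mathsf r_{A,B}(g)-t)^+\ge\sum_g(\mathsf r_{A',B'}(g)-t)^+$, which gives no upper bound whatsoever. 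The right-hand side $(|A|-t)(|B|-t)$ is not invariant under the transform either (only $|A|+|B|$ is), so the two sides cannot be matched along the iteration.

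The remedy, which is essentially Pollard's original argument, is to discard the dual formulation and run the transform on the primal quantity $N_t(A,B):=\sum_g\min(\mathsf r_{A,B}(g),t)=\sum_{i=1}^t|A+_iB|$. The same pointwise inequality yields $N_t(A',B')\le N_t(A,B)$, while the target lower bound $t(|A|+|B|-t)$ depends only on $|A|+|B|$ and is therefore preserved by the transform; the chain $N_t(A,B)\ge N_t(A',B')\ge t(|A'|+|B'|-t)=t(|A|+|B|-t)$ then closes the induction. With this formulation your peeling/saturated-case dichotomy is unnecessary, as is the terminus at arithmetic progressions. One further caution: a single transform can overshoot and produce $|B'|<t$, in which case $N_t(A',B')=|A'||B'|<t(|A|+|B|-t)$ and the chain breaks; Pollard's proof handles this with an additional short argument, so even after fixing the direction you should not expect the iteration alone to finish the job without care at this point.
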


An equivalent form of Pollard's Theorem is obtained  by removing the hypothesis $|A|,\,|B|\geq t$ and adding $|A||B|$ to the quantities minimized in \eqref{pollard-bound}.

\subsection{Generalizing Pollard's Theorem to General Abelian Groups}

It is natural to consider combining Kneser's Theorem and Pollard's Theorem, i.e., finding a common generalization of both  results.
Kneser's Theorem can be viewed as a partial structural description of subsets with very small sumset: If $|A+B|<|A|+|B|-1$, then $|A+B|=|A|+|B|-|H|+\rho$, where $\rho=|(A+H)\setminus A|+|(B+H)\setminus B|$, meaning
$A$ and $B$ are ``large'' subsets of the $H$-periodic subsets $A+H$ and $B+H$, where large is quantified as being contained in $A+H$ and $B+H$ with at most $\rho=|A+B|-|A|-|B|+|H|\leq |H|-2$ missing elements. The complete if-and-only-if characterization of which subsets $A$ and $B$ (of a general abelian group $G$) satisfy $|A+B|\leq |A|+|B|-1$, known as the Kemperman Structure Theorem,  was later accomplished by Kemperman \cite{Kemp,Gry2}. Thus a full generalization of both Kneser and Pollard's Theorems should provide a weak  structural description of those subsets $A$ and $B$ (of a general abelian group $G$) with
$\sum_{i=1}^{t}|A+_iB|<t|A|+t|B|-t^2$ that, in the case $t=1$, readily yields Kneser's Theorem.

 Regarding partial work along these lines,  we have the following result due to Hamidoune and Serra \cite{Gry2,HS}, which generalizes a lemma of  Green and Ruzsa needed by them for counting the number of sum-free sets \cite{Gre-Ruz}.

\begin{theorem}[Hamidoune and Serra \cite{HS,Gry2}]\label{ham-serr-thm}
    Let $t$ be a positive integer, let $G$ be an abelian group, and let $A,\, B\subseteq G$ be finite subsets with $|A|,\, |B|\ge t$. Let $H\le G$ be a subgroup of maximal cardinality such that $A+B$ contains $g+H$ for some $g\in G$. Then
    \begin{align*}
        \sum_{i=1}^t |A+_i B|\ge t|A|+t|B|-t^2-\frac{1}{4}|H|^2.
    \end{align*}
\end{theorem}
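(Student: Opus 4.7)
My strategy begins with the identity
\begin{align*}
\sum_{i=1}^t|A+_iB|=\sum_{g\in G}\min(t,\mathsf r_{A,B}(g))=|A||B|-\sum_{g\in G}(\mathsf r_{A,B}(g)-t)^+,
\end{align*}
where $x^+:=\max\{0,x\}$, which converts the theorem into the equivalent upper bound
\begin{align*}
\sum_g(\mathsf r_{A,B}(g)-t)^+\le(|A|-t)(|B|-t)+\tfrac14|H|^2.\quad(\star)
\end{align*}

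I would first dispose of the regime $|A+B|\le|A|+|B|-t$. Here Proposition~\ref{multi} forces $\mathsf r_{A,B}(g)\ge t$ for every $g\in A+B$, so $A+_iB=A+B$ for each $i\in[1,t]$. Letting $K=\mathsf H(A+B)$, Kneser's Theorem gives $|A+B|\ge|A|+|B|-|K|$, and because $A+B$ is $K$-periodic and hence contains a coset of $K$, the maximality hypothesis on $H$ yields $|K|\le|H|$. Thus $\sum_i|A+_iB|=t|A+B|\ge t|A|+t|B|-t|H|$; combining this with the elementary inequality $t|H|\le t^2+\tfrac14|H|^2$, equivalent to $(|H|-2t)^2\ge 0$, shows that the claimed bound holds. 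We may therefore assume henceforth that $|A+B|\ge|A|+|B|-t+1$.

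In the main case, I would choose subsets $A_0\subseteq A$ and $B_0\subseteq B$ with $|A_0|=|B_0|=t$ and set $A':=A\setminus A_0$, $B':=B\setminus B_0$. By inclusion-exclusion,
\begin{align*}
\mathsf r_{A,B}(g)=\mathsf r_{A',B'}(g)+\underbrace{\mathsf r_{A_0,B}(g)+\mathsf r_{A,B_0}(g)-\mathsf r_{A_0,B_0}(g)}_{\textup{abs}(g)}.
\end{align*}
If the choice of $A_0,B_0$ guarantees $\textup{abs}(g)\le t$ for every $g$ with $\mathsf r_{A,B}(g)>t$, then $(\mathsf r_{A,B}(g)-t)^+\le\mathsf r_{A',B'}(g)$ for every $g\in G$, and summing yields $\sum_g(\mathsf r_{A,B}(g)-t)^+\le(|A|-t)(|B|-t)$, which is $(\star)$ with no correction. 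The substantive task is to pick $A_0,B_0$ that achieve this absorption target up to a controlled deficit.

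The main obstacle is handling configurations where no choice of $A_0,B_0$ completely achieves $\textup{abs}(g)\le t$. Here the hypothesis on $H$ is essential: any element $g$ incurring a deficit forces a coset $g'+K'\subseteq A+B$ for some subgroup $K'$, and maximality of $H$ bounds $|K'|\le|H|$. The total deficit can then be expressed as a product $xy$ of two coset-deficiency counts with $x+y\le|H|$ (for instance the complementary sizes within the critical $H$-coset, which one sees directly in the extremal model $A=B=H$ where the deficit equals $t|H|-t^2$), and AM-GM gives $xy\le\tfrac14(x+y)^2\le\tfrac14|H|^2$, furnishing the correction in $(\star)$. Making this last step rigorous---identifying the deficit locus and quantifying it via a Kemperman-style decomposition within $H$-cosets, together with a near-optimal choice of $A_0,B_0$ (probably by a greedy or extremal argument constrained by Kneser's Theorem applied to the relevant coset)---is where the bulk of the technical work lies.
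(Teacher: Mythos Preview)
The paper does not prove Theorem~\ref{ham-serr-thm}; it is quoted as a known result with references to \cite{HS} and \cite{Gry2}. So there is no ``paper's own proof'' to compare against, and your proposal must stand on its own.

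Your reduction to $(\star)$ is correct, and the first regime $|A+B|\le|A|+|B|-t$ is handled cleanly: Proposition~\ref{multi}, Kneser, maximality of $H$, and the AM--GM inequality $t|H|\le t^2+\tfrac14|H|^2$ combine exactly as you say.

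The main case, however, is not a proof but a wish list. Two concrete gaps:
\begin{itemize}
\item You give no mechanism for choosing $A_0,B_0$ so that $\textup{abs}(g)\le t$ on the high-multiplicity locus, nor any argument that a ``near-optimal'' choice exists whose shortfall is controlled. A priori $\textup{abs}(g)$ can be as large as $2t$, and nothing you have written rules out a large set of $g$ where it is close to $2t$.
\item The assertion that ``any element $g$ incurring a deficit forces a coset $g'+K'\subseteq A+B$ for some subgroup $K'$'' is unsupported. This is the entire link between the deficit and the hypothesis on $H$, and you have not explained why a failure of absorption at a single $g$ (a purely local phenomenon about $(g-A)\cap B$) produces a full coset inside $A+B$. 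The extremal example $A=B=H$ is suggestive but proves nothing about the general case.
\end{itemize}
The existing proofs (Hamidoune--Serra via isoperimetric/atom methods, or the treatment in \cite{Gry2}) do not proceed by stripping off $t$-element subsets; they work instead with the structure of $A+B$ directly. Your absorption idea may be salvageable, but as written the second half is a heuristic, not an argument, and the crucial step---tying the deficit to $|H|$---is precisely the content of the theorem and remains undone.
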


This theorem implies that, if $\sum_{i=1}^t |A+_i B|<t|A|+t|B|-t^2$, then there is a nontrivial subgroup $H$ such that $A+B$ contains $g+H$ for some $g\in G$. The bound, when so formulated, is the same as the one in Pollard's Theorem, but, compared to  Kneser's Theorem, the structural information is not nearly as strong as it only gives  structure for a small local portion of $A+B$.

The result of Hamidoune and Serra is valid for sumsets $A+B$ satisfying the target hypothesis $\sum_{i=1}^t |A+_i B|<t|A|+t|B|-t^2$, but only provides weak structural information. On the other hand, Grynkiewicz proved a result \cite{Gry1,Gry2} which provides strong structural information, but which requires a more stringent upper bound hypothesis for $\sum_{i=1}^t |A+_i B|$.

\begin{theorem}[Grynkiewicz \cite{Gry2}] \label{old}
    Let $t$ be a positive integer, let $G$ be an abelian group, and let $A,\, B\subseteq G$ be finite subsets with $|A|,\, |B|\ge t$. If
    \begin{align}
        \sum_{i=1}^t |A+_i B|< t|A|+t|B|-2t^2+3t-2, \label{eq1}
    \end{align}
    then there exist subsets $A'\subseteq A$ and $B'\subseteq B$ such that
    \begin{align}
        &|A\setminus A'|+|B\setminus B'|\le t-1 \quad\und \quad\label{eq2}A'+_t B'=A'+B'=A+_t B.
       \end{align}
  Moreover, $|A'+B'|<|A'|+|B'|-1$ and
  \be
  \sum_{i=1}^t |A+_i B|\ge t|A|+t|B|-t^2-(t-\ell)(|H|-\rho-t)\ge t|A|+t|B|-t|H|, \label{eq4}
    \ee
    where $H=\mathsf{H}(A+_t B)$, \ $\ell=|A\setminus A'|+|B\setminus B'|$, and $\rho=|(A'+H)\setminus A'|+|(B'+H)\setminus B'|$.
\end{theorem}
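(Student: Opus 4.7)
My plan is to proceed by induction on $t$. For the base case $t=1$, the hypothesis becomes $|A+B| < |A|+|B|-1$, so Kneser's Theorem directly provides a nontrivial stabilizer $H = \mathsf H(A+B)$, and setting $A' = A$, $B' = B$ (with $\ell = 0$ and $\rho = |A+H|+|B+H|-|A|-|B|$) yields the required bound \eqref{eq4} immediately.

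For the inductive step $t \geq 2$, the key tool is the elementary containment $(A\setminus\{a\})+_i B \supseteq A+_{i+1} B$ for every $a \in A$ and $i \geq 1$, which holds because removing a single element from $A$ decreases each representation count $\mathsf r_{A,B}(g)$ by at most one. Using this, I would iteratively construct a chain $(A,B) = (A_0,B_0) \supseteq (A_1,B_1) \supseteq \ldots$, where at each step one element is removed from either $A_{j-1}$ or $B_{j-1}$, chosen so as to preserve the identity $A_j +_{t-j} B_j = A+_t B$. The process terminates after at most $t-1$ steps, either because we have reached $j = t-1$ (in which case $A_{t-1} + B_{t-1} = A+_t B$ automatically, since $A_{t-1} +_1 B_{t-1}$ is just the ordinary sumset) or because no valid further removal exists at some earlier step. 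Setting $A' = A_\ell$ and $B' = B_\ell$, I obtain $\ell = |A\setminus A'|+|B\setminus B'| \leq t-1$ and the desired equality $A' +_t B' = A'+B' = A+_t B$. Establishing $|A'+B'| < |A'|+|B'|-1$ --- which is needed to apply Kneser nontrivially --- should come from the sum hypothesis: if instead $|A'+B'| \geq |A'|+|B'|-1$, then adding in the contributions of the removed elements (via the Pigeonhole Bound) would force $\sum_{i=1}^t |A+_i B|$ to exceed the threshold $t|A|+t|B|-2t^2+3t-2$.

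With $A'$ and $B'$ in hand, Kneser's Theorem applied to $A'+B'$ produces the nontrivial stabilizer $H = \mathsf H(A+_t B)$ and the inequality $|A'+B'| \geq |A'|+|B'|+\rho-|H|$. To derive the refined bound \eqref{eq4}, I would estimate each $|A+_i B|$ individually: for $i > \ell$, the chain construction yields $|A+_i B| \geq |A' + B'|$ directly; for $i \leq \ell$, applying Kneser to $A_{i-1} + B_{i-1} \supseteq A+_i B$ and using $|A_{i-1}|+|B_{i-1}| = |A|+|B|-(i-1)$ gives a larger estimate that accounts for the contributions of the yet-to-be-removed elements. Summing these level-by-level estimates and tracking the Kneser deficiencies should produce $t|A|+t|B|-t^2-(t-\ell)(|H|-\rho-t)$, with the final inequality $\geq t|A|+t|B|-t|H|$ following by dropping nonnegative terms.

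The principal obstacle is the construction of the chain $(A_j, B_j)$ together with the verification that $|A'+B'| < |A'|+|B'|-1$: one must show that at each intermediate step, either a removable element exists, or the partial sum $\sum_{i=1}^t |A+_i B|$ has already reached a size large enough to contradict the hypothesis. This requires a careful case analysis of how the $H$-coset structure of $A+_t B$ interacts with $A$ and $B$, and the sharp constant $-2t^2+3t-2$ in the hypothesis appears to be exactly what is needed to close each step of the induction without leaving uncontrolled slack.
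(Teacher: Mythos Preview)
Your approach diverges fundamentally from the actual proof and has a genuine gap in its central construction.

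The original proof (as described in the paper and in \cite{Gry2}) does \emph{not} induct on $t$; it uses a triple induction on $\bigl(\sum_{i=1}^t|A+_iB|,\,-(|A|+|B|),\,\min\{|A|,|B|\}\bigr)$ built around the \emph{Dyson transform}, replacing $(A,B)$ by $(A\cup B,\,A\cap B)$ after an extremal translation. Element removal does appear, but only in an auxiliary claim (corresponding to \ref{claimB} here) establishing that $|(x+B)\setminus(A+_{t+1}B)|\le t-1$ for every $x\in A$; this bound feeds into the Dyson-transform argument rather than driving a chain construction. Once the existence of $A',B'$ satisfying \eqref{eq2} is established inductively, all remaining conclusions (the inequality $|A'+B'|<|A'|+|B'|-1$ and the bound \eqref{eq4}) follow from Proposition~\ref{mainprop}, a routine application of Kneser's Theorem to the $H$-coset slices of $A'$ and $B'$---not from level-by-level estimates on $|A+_iB|$.

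The gap in your plan is the chain itself. You want to maintain $A_j+_{t-j}B_j=A+_tB$, but the containment you cite only gives $(A\setminus\{a\})+_{t-1}B\supseteq A+_tB$; the reverse inclusion requires that \emph{every} element $g$ with $\mathsf r_{A,B}(g)=t-1$ satisfy $g\in a+B$, and there is no reason a single $a$ should cover all such $g$. When no valid removal exists, you assert that either $A_\ell+B_\ell=A+_tB$ or the sum hypothesis is already violated, but ``no valid removal'' merely says that for each $a\in A_\ell$ there is \emph{some} $g$ with $\mathsf r_{A_\ell,B_\ell}(g)=t-\ell-1$ and $g\notin a+B_\ell$---this is far weaker than forcing $A_\ell+B_\ell=A_\ell+_{t-\ell}B_\ell$, and it is not at all clear how to leverage it into a lower bound on $\sum_{i=1}^t|A+_iB|$ of the required strength. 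This is exactly the obstacle you flag at the end, and it is not a technicality: it is the heart of the argument, and the Dyson-transform machinery is what the actual proof uses to get around it.
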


The structural conclusions in Theorem \ref{old} are fairly strong.
The set of $t$-popular sums $A+_tB$ is forced to  equal an ordinary sumset $A'+B'$, and the conclusion $|A'+B|<|A'|+|B'|-1$ means the precise possibilities for the structure of $A'$ and $B'$ are  given by the Kemperman Structure Theorem mentioned earlier. The sets $A$ and $B$ are then obtained by taking the highly structured sets $A'$ and $B'$ and adding a small number of additional elements, at most $t-1$,  and in the case $t=1$, the bound \eqref{eq4} becomes an equivalent statement of Kneser's Theorem.
We  remark that the conclusions $|A'+B'|<|A'|+|B'|-1$ and \eqref{eq4} are  implied by \eqref{eq2} (assuming $A'$ and $B'$ chosen extremely), though this requires a moderate amount of work (see Proposition \ref{mainprop}).
Theorem \ref{old} has also been used as a needed tool for various applications \cite{CK,GL,MT,Pla,Pok}.

Our main result  is the following strengthening of Theorem \ref{old},  showing  the same structural conclusions hold under the weaker hypothesis $\sum_{i=1}^t |A+_i B|< t|A|+t|B|-\frac{4}{3}t^2+\frac{2}{3}t$. Thus we improve the main quadratic term in \eqref{eq1} from $-2t^2$ to $-\frac43t^2$, closer to the target goal of $-t^2$ needed for a full generalization of both Pollard's and Kneser's Theorems. Note,  the case $t=1$ could be included in Theorem \ref{new} by adding the constant $-\frac13$ to the bound in \eqref{hypothesis}. In the case $t=2$, we have $-2t^2+3t-2=\lceil-\frac{4}{3}t^2+\frac{2}{3}t\rceil=-t^2$, so Pollard's Theorem is fully generalized for $t\leq 2$, though not for $t\geq 3$ as there is then a gap between the bounds.  However, as noted in Section \ref{sec-conj}, the bound \eqref{hypothesis} is nonetheless optimal for $t\in [2,4]$.
We also give  a short application of our theorem to restricted sumsets later in Section \ref{sec-app}.

\begin{theorem} \label{new}
    Let $t\geq 2$ be a positive integer, let $G$ be an abelian group, and let $A,\, B\subseteq G$ be finite subsets with $|A|,\, |B|\ge t$. If
    \begin{align}
        \sum_{i=1}^t |A+_i B|< t|A|+t|B|+\biggl\lceil-\frac{4}{3}t^2+\frac{2}{3}t\biggr\rceil, \label{hypothesis}
    \end{align}
    then
    there exist subsets $A'\subseteq A$ and $B'\subseteq B$ such that
    \begin{align}
        &|A\setminus A'|+|B\setminus B'|\le t-1 \quad\und\quad \label{conclusion}A'+_t B'=A'+B'=A+_t B.
       \end{align}
  Moreover, $|A'|,\,|B'|\geq t+1$, \  $|A'+B'|<|A'|+|B'|-t$, and
  \be
  \sum_{i=1}^t |A+_i B|\ge t|A|+t|B|-t^2-(t-\ell)(|H|-\rho-t)\ge t|A|+t|B|-t|H|, \label{conclusion-impliedbound}
    \ee
    where $H=\mathsf{H}(A+_t B)$, \ $\ell=|A\setminus A'|+|B\setminus B'|$, and $\rho=|(A'+H)\setminus A'|+|(B'+H)\setminus B'|$.

\end{theorem}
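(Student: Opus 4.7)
My proof strategy is to reduce the theorem to showing the existence of subsets $A', B'$ satisfying \eqref{conclusion}, then invoke an extremality proposition to derive the remaining conclusions. The key intermediate result is Proposition \ref{mainprop} referenced in the paper, which takes as input any pair $A' \subseteq A$, $B' \subseteq B$ satisfying \eqref{conclusion} (chosen extremally, say by minimizing $|A'|+|B'|$) and outputs the lower bound \eqref{conclusion-impliedbound} together with $|A'+B'| < |A'|+|B'|-1$. This proposition should be proved by applying Kneser's Theorem to $A'+B'$ with stabilizer $H = \mathsf H(A'+B')$, combined with a careful attribution of the contributions to $\sum_{i=1}^t |A+_i B|$ coming from representations inside $A' \times B'$ versus those involving the removed elements of $A \setminus A'$ and $B \setminus B'$.

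The central task is then to find $A' \subseteq A$ and $B' \subseteq B$ satisfying \eqref{conclusion} under hypothesis \eqref{hypothesis}. My plan is an iterative greedy descent: starting from $(A,B)$, remove elements from $A$ or $B$ one at a time while preserving $A+_t B$, and track the decrease in $\sum_{i=1}^t |A+_i B|$ at each step---removing $b$ from the current $B$-set decreases this sum by exactly the number of $a$ in the current $A$-set with $\mathsf r_{A,B}(a+b) \le t$. Let $\ell$ denote the final number of removals; the goal is to show $\ell \le t-1$. If instead $\ell \ge t$, cumulative decrements combined with \eqref{conclusion-impliedbound} applied to the final extremal pair should force $\sum_{i=1}^t |A+_i B|$ to exceed the threshold in \eqref{hypothesis}, yielding a contradiction.

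The main obstacle lies in sharpening this accounting to obtain the $-\frac{4}{3}t^2$ constant rather than the $-2t^2$ of Theorem \ref{old}. A naive step-by-step bound gives a cost of roughly $2t$ per removed element, only recovering the weaker Theorem \ref{old}. The improvement should come from exploiting overlap: when $\ell$ removals are required, the representations they account for at the threshold value $t$ must overlap substantially, and an optimal distribution of losses across the $\ell$ steps should yield a quadratic saving of order $\frac{2}{3}t^2$. I expect identifying and exploiting this overlap structure---probably through a convexity-type inequality or a refined two-step removal analysis---to be the crux of the proof.

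Once $\ell \le t-1$ and \eqref{conclusion-impliedbound} are in hand, the strengthened conclusions follow by direct computation. Combining the inequality $\sum_{i=1}^t |A+_i B| \ge t|A|+t|B|-t^2-(t-\ell)(|H|-\rho-t)$ with \eqref{hypothesis} forces $(t-\ell)(|H|-\rho-t) > t(t-2)/3 \ge 0$, hence $|H|-\rho \ge t+1$. In the case $\ell = 0$, $\sum_{i=1}^t |A+_i B| = t|A'+B'|$ and \eqref{hypothesis} yields directly $|A'+B'| < |A|+|B|-(4t-2)/3 \le |A'|+|B'|-t$ for $t \ge 2$; an analogous computation handles general $\ell$ using the fine form of \eqref{conclusion-impliedbound}. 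Finally, $|A'|, |B'| \ge t+1$ follows because $|A'| \le t$ would, via $A'+_t B' = A'+B'$, force each $g \in A'+B'$ to satisfy $\mathsf r_{A',B'}(g) = |A'| = t$, which implies $A' \subseteq a_0 + \mathsf H(B')$ for any $a_0 \in A'$ and gives $|A'+B'| = |B'| \ge |A'|+|B'|-t$, contradicting the just-established bound $|A'+B'| < |A'|+|B'|-t$.
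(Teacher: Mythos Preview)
Your reduction of the problem to establishing \eqref{conclusion} alone, with the remaining conclusions then following from Proposition \ref{mainprop}, is correct and matches what the paper does. Your final paragraph deriving $|A'+B'|<|A'|+|B'|-t$ and $|A'|,|B'|\ge t+1$ from \eqref{conclusion} and \eqref{hypothesis} is also essentially sound (and again is subsumed by Proposition \ref{mainprop}).

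The genuine gap is in your plan for proving \eqref{conclusion}. Your proposed ``iterative greedy descent'' is not the route the paper takes, and as written it is not a proof: you explicitly identify the passage from $-2t^2$ to $-\frac{4}{3}t^2$ as ``the crux'' and then leave it as an expectation (``probably through a convexity-type inequality or a refined two-step removal analysis''). No such overlap or convexity argument is supplied, and there is no evidence that a direct accounting of per-step decrements in a greedy removal process can be organized to yield the constant $\frac{4}{3}$. Indeed, even the $-2t^2$ result (Theorem \ref{old}) was not proved this way.

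The paper's actual argument is structurally quite different. It runs a quadruple induction on $\bigl(t,\ \sum_{i=1}^t|A+_iB|,\ -(|A|+|B|),\ \min\{|A|,|B|\}\bigr)$, with the new induction on $t$ being the key addition over the proof of Theorem \ref{old}. The induction on $t$ is used immediately (Claim A) to force $\sum_{i=1}^{t-1}|A+_iB|\ge (t-1)|A|+(t-1)|B|-\frac{4}{3}(t-1)^2+\frac{2}{3}(t-1)$, which combined with \eqref{hypothesis} bounds $|A+_{t+1}B|$ from above. The heart of the proof is then the Dyson transform: one replaces $(A,B)$ by $(A\cup B,\,A\cap B)$ after an extremal translation, applies the induction hypothesis to this transformed pair (which has strictly smaller $\min\{|A|,|B|\}$), and lifts the resulting structure back through a delicate case analysis. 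The constant $\frac{4}{3}$ emerges not from any single convexity step but from balancing several inequalities across this case analysis---in particular, a three-way split on the size of $\ell$ relative to $\frac{2}{3}t$ in Case 2. Your proposal contains none of this machinery, and the greedy descent you sketch does not obviously interact with the Dyson transform or the induction on $t$ at all.
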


\section{Proof of the Main Result}

We will need the following result from \cite[Proposition 12.1]{Gry2}, which we state in slightly fuller generality than it appears in \cite{Gry2}, though we will only use the case $\alpha=0$. The proof of Proposition \ref{mainprop} (as stated below) is identical to that of \cite[Proposition 12.1]{Gry2}, apart from modifying the straightforward estimates to take into account the factor $\alpha$, the improved bound given in Item 3 (which was stated in the body of the original proof from  \cite[Proposition 12.1]{Gry2} as (12.14)), and the improved bound in Item 2, which follows from combining \eqref{alphabound} with Items  3 and 5. It is also a fairly routine application of Kneser's Theorem.  Of particular importance, if \eqref{conclusion}  holds for some subsets $A'\subseteq A$ and $B'\subseteq B$, then replacing $A'$ and $B'$ by the sets $A''$ and $B''$ defined in  Proposition \ref{mainprop} results in both \eqref{conclusion} and \eqref{conclusion-impliedbound} holding, with various other useful properties (including that $|A'+B'|<|A'|+|B'|-t$ and $|A'|,\,|B'|\geq |H|-\rho\geq t+1$) then also following from Proposition \ref{mainprop}.

\begin{proposition}[Grynkiewicz \cite{Gry2}]\label{mainprop}
    Let $t$ be a positive integer, let $\alpha\geq 0$ be a nonnegative real number, let $G$ be an abelian group, and let $A,\, B\subseteq G$ be finite subsets with $|A|,\,|B|\geq t$ and
    \begin{align}
        \sum_{i=1}^t |A+_i B|&<t|A|+t|B|-(1+\alpha)t^2. \label{alphabound}
    \end{align}
    Assume \eqref{conclusion} holds using the subsets $A'\subseteq A$ and $B'\subseteq B$. Let $H=\mathsf{H}(A+_t B)$, \ $A''=(A'+H)\cap A$, $B''=(B'+H)\cap B$, $\rho=|(A''+H)\setminus A''|+|(B''+H)\setminus B''|$, and $\ell=|A\setminus A''|+|B\setminus B''|$. Then the following all hold.
    \begin{enumerate}
        \item[1.] \eqref{conclusion} and \eqref{conclusion-impliedbound} hold using the subsets $A''\subseteq A$ and $B''\subseteq B$.
        \item[2.] $|A''+B''|<|A''|+|B''|-(1+\alpha)t$.
        \item[3.]  $\sum_{i=1}^t|A+_iB|\geq \sum_{i=1}^t |A''+_i B''|+\ell(|H|-\rho)\geq t|A|+t|B|-t^2-(t-\ell)(|H|-\rho-t)$.
        \item[4.] For any $a\in A\setminus A''$, there exists $b''\in B''$ such that $\Bigl(a+\bigl((b''+H)\cap B''\bigr)\Bigr)\cap (A+_t B)=\emptyset$, which further implies that $|(a+B'')\setminus (A+_t B)|\ge |H|-\rho$; and for any $b\in B\setminus B''$, there exists $a''\in A''$ such that $\Bigl(\bigl((a''+H)\cap A''\bigr)+b\Bigr)\cap (A+_t B)=\emptyset$, which further implies that $|(A''+b)\setminus (A+_t B)|\ge |H|-\rho$.
        \item[5.] $|H|-\rho\ge \big\lfloor\frac{(1+\alpha)t^2-t\ell}{t-\ell} \big\rfloor+1\geq \lfloor (1+\alpha)t\rfloor+1>(1+\alpha)t$.
        \item[6.] If $\ell=t-1$, then
        \begin{align*}
            &\sum_{i=1}^t |(A\cup\{g\})+_i B|\ge t|A\cup\{g\}|+t|B|-t^2\quad\und\\
            &\sum_{i=1}^t |A+_i (B\cup\{h\})|\ge t|A|+t|B\cup\{h\}|-t^2
        \end{align*}
        for any $g\in G\setminus (A''+H)$ and any $h\in G\setminus (B''+H)$.
    \end{enumerate}
\end{proposition}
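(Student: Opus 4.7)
The plan is to combine Kneser's Theorem, applied to $A''$ and to one-element extensions $A''\cup\{g\}$, with the counting identity $\sum_{i=1}^t|A+_iB|=\sum_{g\in G}\min\{t,\mathsf r_{A,B}(g)\}$. I first handle the set-level part of Item 1. Since $A'\subseteq A''\subseteq A'+H$ and $B'\subseteq B''\subseteq B'+H$, and $A+_tB=A'+B'$ is $H$-periodic (because $H=\mathsf H(A+_tB)$), I get $A+_tB\subseteq A''+B''\subseteq(A'+H)+(B'+H)=(A'+B')+H=A+_tB$, forcing $A''+B''=A+_tB$. Monotonicity of $A+_iB$ in $i$ likewise gives $A''+_tB''=A+_tB$, and $\ell=|A\setminus A''|+|B\setminus B''|\leq|A\setminus A'|+|B\setminus B'|\leq t-1$.

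The heart of the argument is the following strengthening of Item 4: for every $g\in G\setminus(A''+H)$, some $b''\in B''$ satisfies $g+b''\notin A+_tB$. Arguing by contradiction, assume $g+B''\subseteq A+_tB$, so that $(A''\cup\{g\})+B''=A+_tB$, whose stabilizer is still $H$. Kneser's Theorem (Theorem \ref{kneserthm}) then gives $|A+_tB|\geq|(A''\cup\{g\})+H|+|B''+H|-|H|=|A''+H|+|B''+H|=|A''|+|B''|+\rho$, using $g\notin A''+H$ to enlarge $|(A''\cup\{g\})+H|$ by exactly $|H|$ and the definition of $\rho$. On the other hand, $t|A+_tB|\leq\sum_i|A+_iB|<t|A|+t|B|-(1+\alpha)t^2$ forces $|A+_tB|<|A''|+|B''|+\ell-(1+\alpha)t$, so $\rho<\ell-(1+\alpha)t$, contradicting $\rho\geq 0$ and $\ell\leq t-1<(1+\alpha)t$. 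Once some $b''\in B''$ with $g+b''\notin A+_tB$ is found, $H$-periodicity of $A+_tB$ makes the entire coset $g+b''+H$ disjoint from $A+_tB$, and this coset meets $g+B''$ in at least $|H|-|(b''+H)\setminus B''|\geq|H|-\rho$ points. Specializing to $g=a\in A\setminus A''$ (which lies in $G\setminus(A''+H)$ since $A''=A\cap(A'+H)$) yields Item 4, and the dual statement follows symmetrically.

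For Item 3, I would expand $\sum_i|A+_iB|-\sum_i|A''+_iB''|$ via the $\min(t,\mathsf r)$ identity. Because $A''+_tB''=A+_tB$, both sums cap at $t$ on each point of $A+_tB$, so the difference collapses to the number of pairs $(a,b)\in(A\times B)\setminus(A''\times B'')$ with $a+b\notin A+_tB$. The strengthened Item 4, applied to each $a\in A\setminus A''$ and dually to each $b\in B\setminus B''$, bounds this count below by $\ell(|H|-\rho)$. Combining with $\sum_i|A''+_iB''|=t|A''+B''|$ and the Kneser lower bound $|A''+B''|\geq|A''|+|B''|+\rho-|H|$, then substituting $|A''|=|A|-|A\setminus A''|$ and $|B''|=|B|-|B\setminus B''|$ and simplifying, gives the bound in Item 3, which completes Item 1. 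Items 2 and 5 follow algebraically: combining Item 3 with \eqref{alphabound} yields $(t-\ell)(|H|-\rho-t)>\alpha t^2$, which rearranges to Item 5, while isolating $|A''+B''|$ in the same chain and using $|H|-\rho>t$ (from Item 5) yields Item 2.

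Finally, for Item 6 with $\ell=t-1$ and $g\in G\setminus(A''+H)$ in the nontrivial case $g\notin A$, the elementary identity $\sum_i|(A\cup\{g\})+_iB|-\sum_i|A+_iB|=|(g+B)\setminus(A+_tB)|$ (again from the $\min(t,\mathsf r)$ identity) combines with $|(g+B)\setminus(A+_tB)|\geq|(g+B'')\setminus(A+_tB)|\geq|H|-\rho$ (from the strengthened Item 4) and Item 3 specialized to $\ell=t-1$, which gives $\sum_i|A+_iB|\geq t|A|+t|B|-t^2-(|H|-\rho-t)$, to produce exactly $t|A\cup\{g\}|+t|B|-t^2$; the dual for $h\in G\setminus(B''+H)$ is identical. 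The main obstacle is the strengthened Item 4 in the second paragraph, where the interplay between the hypothesis and Kneser's Theorem on the extended set $A''\cup\{g\}$ must be arranged carefully; the remaining items amount to careful bookkeeping.
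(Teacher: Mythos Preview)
Your proof is correct and follows essentially the same route the paper indicates (it cites \cite[Proposition~12.1]{Gry2} and calls the argument ``a fairly routine application of Kneser's Theorem,'' noting explicitly that Item~2 comes from combining \eqref{alphabound} with Items~3 and~5, just as you do). Your contradiction argument for the strengthened Item~4, applying Kneser to $(A''\cup\{g\})+B''$ and exploiting that $g\notin A''+H$ forces an extra full $H$-coset in $(A''\cup\{g\})+H$, is exactly the mechanism one expects here, and the pair-counting via the identity $\sum_{i=1}^t|A+_iB|=\sum_{g}\min\{t,\mathsf r_{A,B}(g)\}$ for Item~3 is standard and sound; in particular, your observation that the two families of pairs (those with $a\in A\setminus A''$ and those with $a\in A''$, $b\in B\setminus B''$) are disjoint is the key to the $\ell(|H|-\rho)$ bound.

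Two small remarks. First, to finish Item~1 you should make explicit the second inequality in \eqref{conclusion-impliedbound}, namely $t(|H|-t)\ge(t-\ell)(|H|-\rho-t)$, which follows at once from $t\ge t-\ell>0$ and $|H|-t\ge|H|-\rho-t>0$ (the latter by Item~5). Second, your phrase ``in the nontrivial case $g\notin A$'' in Item~6 is the right reading: if $g\in A$ then $g\in A\setminus A''$, and the asserted inequality would contradict \eqref{alphabound}, so the statement is intended (and only used in the paper) for $g\notin A$.
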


Our proof of Theorem \ref{new} builds upon the framework established in the proof of Theorem \ref{old}, but requires some  key enhancements. The core of both proofs is an inductive argument built around the Dyson Transform, which replaces the sets $A$ and $B$ by $A\cup B$ and $A\cap B$ (after appropriate extremal translations of $A$ and $B$).  Compared to the proof of Theorem \ref{old}, our proof of Theorem \ref{new} incorporates a $4$-fold (rather than $3$-fold) induction, adding induction along the parameter  $t$. Besides this, there are three points in the proof of Theorem \ref{old} where the stronger hypothesis $\sum_{i=1}^t|A+_iB|<t|A|+t|B|-2t^2+3t-2$ is needed rather than only the desired hypothesis $\sum_{i=1}^t|A+_iB|<t|A|+t|B|-t^2$. The original arguments proving Theorem \ref{old} break down at these points, requiring new and more involved arguments to circumvent these difficulties. For readers wishing to compare with the proof of Theorem \ref{old}, we mention beforehand that, in the following proof of Theorem \ref{new}, these three points are:
\begin{enumerate}
    \item \ref{claimC}: $|B|\ge 2t-1$.
    \item \ref{claimD}: $|B(0)|\ge t$.
    \item Case 2: $A'=A(0)$, but there exists $b\in B(0)\setminus B'$.
\end{enumerate}

Essentially, \ref{claimB}, Case 1, and Case 3 in our proof rely on the same logic as Claim A, Subcase 1.1, and Case 2 in the proof of Theorem \ref{old}  \cite[pp.~161--174]{Gry2}. For completeness, we have included the concise proofs of \ref{claimB} and Case 1 in the  main proof below. However, in the interest of avoiding too much redundancy and saving space, we have omitted the details for the longer and more involved Case 3. The argument is  identical to the corresponding part in \cite[Case 2 in Theorem 1.7]{Gry2} and is rather self-contained, so its omission does not interfere with the flow of ideas in other parts.

\begin{proof}[Proof of Theorem \ref{new}]
The proof is built upon a quadruple induction on
\begin{align*}
    \Biggl(t,\ \sum_{i=1}^t |A+_i B|,\ -(|A|+|B|),\ \min\{|A|,\, |B|\}\Biggr)
\end{align*}
using the lexicographic order. For $t=2$,
we have $-2t^2+3t-2=\lceil-\frac{4}{3}t^2+\frac{2}{3}t\rceil=-t^2$, so the hypotheses \eqref{eq2} and \eqref{hypothesis} coincide. In such case, Theorem \ref{old} (and Proposition \ref{mainprop}) yield all desired conclusions. Thus we can assume $t\geq 3$ with the base of the induction complete, and we now assume that, for any pair of $\mathcal{A}$ and $\mathcal{B}$ which appears before $A$ and $B$ in the lexicographic order, Theorem \ref{new} is true. From  \eqref{hypothesis} and $t\geq 2$, we have
\begin{align}
    \sum_{i=1}^t |A+_i B|
    &\leq t|A|+t|B|-\frac{4}{3}t^2+\frac{2}{3}t-\frac{1}{3} \label{tbound} \\
    &< t|A|+t|B|-t^2, \label{tboundaux}
\end{align}
By symmetry, we may assume $|A|\ge |B|$. We can see that if $|B|=t$, then no element in $A+B$ has more than $t$ representations, so
    $$\sum_{i=1}^t |A+_i B|=|A||B|=t|A|+t|B|-t^2,$$
contrary to \eqref{tboundaux}. Thus $$|A|\ge |B|\ge t+1.$$
In view of Proposition \ref{mainprop}, we need to show \eqref{conclusion} holds, which we now assume by contradiction fails.

\begin{claim}\label{claimA} $\sum_{i=1}^{t-1}|A+_iB|\geq(t-1)|A|+(t-1)|B|-\frac43(t-1)^2+\frac23(t-1)$.
\end{claim}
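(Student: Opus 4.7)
The plan is to argue by contradiction, using the outer induction on $t$. I would assume
\[
\sum_{i=1}^{t-1}|A+_iB|<(t-1)|A|+(t-1)|B|-\tfrac{4}{3}(t-1)^2+\tfrac{2}{3}(t-1),
\]
and aim to contradict the standing assumption that \eqref{conclusion} fails at level $t$. Since the left-hand side is an integer, the strict inequality persists after replacing the right-hand side by its ceiling, so the pair $(A,B)$ satisfies the hypothesis \eqref{hypothesis} of Theorem \ref{new} at level $t-1$. As $(t-1,\ldots)$ precedes $(t,\ldots)$ in the lexicographic order, the inductive hypothesis then furnishes subsets $A'\subseteq A$ and $B'\subseteq B$ with $|A\setminus A'|+|B\setminus B'|\le t-2$ and $A'+_{t-1}B'=A'+B'=A+_{t-1}B$.

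Next I would invoke Proposition \ref{mainprop} at level $t-1$ with $\alpha=0$. Its hypothesis \eqref{alphabound} then reads $\sum_{i=1}^{t-1}|A+_iB|<(t-1)|A|+(t-1)|B|-(t-1)^2$, which holds since for $t\ge 3$ we have $-\tfrac{4}{3}(t-1)^2+\tfrac{2}{3}(t-1)\le -(t-1)^2$ (equivalently $(t-1)(3-t)\le 0$). Proposition \ref{mainprop} then produces refined subsets $A''\subseteq A$ and $B''\subseteq B$ still witnessing \eqref{conclusion} at level $t-1$, and its Item~2 (applied with $\alpha=0$ at level $t-1$) gives the key strengthening $|A''+B''|<|A''|+|B''|-(t-1)$. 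Since both sides are integers, this means $|A''|+|B''|-|A''+B''|\ge t$.

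The final step is to lift the conclusion from level $t-1$ up to level $t$. By Proposition \ref{multi}, every $g\in A''+B''$ satisfies $\mathsf r_{A'',B''}(g)\ge |A''|+|B''|-|A''+B''|\ge t$, so $A''+_tB''=A''+B''=A+_{t-1}B$. Combined with the trivial inclusions $A+_tB\subseteq A+_{t-1}B$ and $A''+_tB''\subseteq A+_tB$, this gives $A+_tB=A''+B''=A''+_tB''$. Together with $|A\setminus A''|+|B\setminus B''|\le t-2\le t-1$, this shows $A''$ and $B''$ satisfy \eqref{conclusion} at level $t$, contradicting the standing assumption.

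I do not expect any real obstacle here. The only delicate points are the integrality argument needed to pass from the strict bound in the contradiction hypothesis to the ceiling form of \eqref{hypothesis}, and the arithmetic inequality $-\tfrac{4}{3}(t-1)^2+\tfrac{2}{3}(t-1)\le -(t-1)^2$ for $t\ge 3$, which is tight at $t=3$ but still compatible with the strict inequality required by \eqref{alphabound}. The substance of the claim is really just the observation that Propositions \ref{mainprop} and \ref{multi} together let one propagate \eqref{conclusion} across a single level of the $t$-induction.
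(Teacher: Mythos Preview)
Your proof is correct and follows essentially the same approach as the paper: assume the claim fails, apply the induction hypothesis at level $t-1$, pass through Proposition~\ref{mainprop}, and show that every element of $A''+B''$ in fact has at least $t$ representations, yielding \eqref{conclusion} at level $t$. The only cosmetic difference is that you obtain $\mathsf r_{A'',B''}(g)\ge t$ via Item~2 of Proposition~\ref{mainprop} together with Proposition~\ref{multi}, whereas the paper uses Item~5 (giving $|H|-\rho\ge t$) together with the Pigeonhole Bound on $H$-coset slices; these two routes are interchangeable.
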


\begin{proof}[Proof of \ref{claimA}]
Assume by contradiction that
\begin{align*}
    \sum_{i=1}^{t-1} |A+_i B|
    &\leq (t-1)|A|+(t-1)|B|-\frac{4}{3}(t-1)^2+\frac{2}{3}(t-1)-\frac{1}{3}\\
    &< (t-1)|A|+(t-1)|B|-(t-1)^2,
\end{align*}
with the latter inequality in view of $t\geq 3$.
Then, by the induction hypothesis, there exist $A'\subseteq A$ and $B'\subseteq B$ such that
\begin{align}
    &\ell:=|A\setminus A'|+|B\setminus B'|\le (t-1)-1\; \nn\und\;A'+_{t-1} B'=A'+B'=A+_{t-1} B.
\end{align}
Let $H=\mathsf{H}(A+_{t-1} B)$ and $\rho=|(A'+H)\setminus A'|+|(B'+H)\setminus B'|$.

We apply Proposition \ref{mainprop} to $t-1$, $A$, and $B$, and replace $A'$ and $B'$ by the resulting $A''$ and $B''$. By Item 5 in Proposition \ref{mainprop}, we have $|H|-\rho\ge (t-1)+1=t$. For an $H$-coset $g+H$, $(g+H)\cap A'$ is called an \emph{$H$-coset slice} in $A'$, and $(g+H)\cap B'$ is called an \emph{$H$-coset slice} in $B'$. Applying the Pigeonhole Bound  (Proposition \ref{pigeon}) to each pair of $H$-coset slices in $A'$ and $B'$, we know that every element in $A'+B'=A'+_{t-1} B'=A+_{t-1} B$ actually has at least $|H|-\rho\ge t$ representations, so
    $A'+B'=A'+_t B'=A+_t B$. But now \eqref{conclusion} holds, contrary to assumption.
\end{proof}

\begin{claim}\label{claimB}For every $x\in A$ and $y\in B$, we have
    \begin{align}
        |(x+B)\setminus (A+_{t+1}B)|\le t-1\;\und\;
        |(A+y)\setminus (A+_{t+1}B)|\le t-1. \nn
    \end{align}
    \end{claim}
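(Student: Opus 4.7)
First I would note that by symmetry it suffices to prove the first inequality. Suppose for contradiction that $|(x+B) \setminus (A+_{t+1}B)| \geq t$ for some $x \in A$, and set $\tilde A := A \setminus \{x\}$ (so $|\tilde A| \geq t$). Since deleting $x$ from $A$ decreases $\mathsf r_{A,B}(g)$ by exactly one when $g \in x+B$ and by zero elsewhere,
$$\sum_{i=1}^t|A+_iB| - \sum_{i=1}^t|\tilde A+_iB| = |\{g \in x+B : \mathsf r_{\tilde A, B}(g) \leq t-1\}| = |(x+B)\setminus (A+_{t+1}B)| \geq t.$$
Combined with \eqref{tbound}, the pair $(\tilde A, B)$ satisfies \eqref{hypothesis} and lies strictly earlier than $(A,B)$ in the lex order, so by the induction hypothesis \eqref{conclusion} holds for $(\tilde A, B)$. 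I would next apply Proposition \ref{mainprop} with $\alpha = 0$ to normalize the witnesses to $A'' \subseteq \tilde A$ and $B'' \subseteq B$ with $A'' = (A''+H) \cap \tilde A$ and $B'' = (B''+H)\cap B$, where $H := \mathsf H(\tilde A+_tB)$. By Item 2, $|A''+B''| < |A''|+|B''|-t$, hence Proposition \ref{multi} yields $\mathsf r_{A'',B''}(g) \geq t+1$ for every $g \in A''+B''$. This forces $A+_{t+1}B = \tilde A+_tB = A''+B''$.

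The plan is then to case-split on whether $x+B'' \subseteq A''+B''$. If so (which automatically covers the case $x \in A''+H$, since then $H$-periodicity gives $x+B'' \subseteq (A''+H)+B'' = (A''+B'')+H = A''+B''$), then $x+B'' \subseteq A+_{t+1}B$, and therefore $(x+B)\setminus (A+_{t+1}B) \subseteq x+(B\setminus B'')$, a set of size at most $|B\setminus B''| \leq \ell \leq t-1$, contradicting the standing assumption. Otherwise $x+B'' \not\subseteq A''+B''$, and in particular $x \notin A''+H$; in this situation I aim to apply Proposition \ref{mainprop} Item 6 with $g = x \in G\setminus(A''+H)$ to obtain $\sum_{i=1}^t|A+_iB| = \sum_{i=1}^t|(\tilde A\cup\{x\})+_iB| \geq t|A|+t|B|-t^2$, which contradicts \eqref{hypothesis} since $\lceil -\tfrac{4}{3}t^2+\tfrac{2}{3}t\rceil \leq -t^2$ for $t \geq 3$.

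The main technical hurdle is that Item 6 requires $\ell := |\tilde A\setminus A''|+|B\setminus B''| = t-1$, which I must verify in the second subcase. My plan is to rule out $\ell < t-1$ by showing that if $\ell < t-1$ and $x \notin A''+H$, then the pair $(A''\cup\{x\}, B'')$ already witnesses \eqref{conclusion} for $(A,B)$: the size bound $\ell+1 \leq t-1$ is immediate, and the sumset equality $A''+B'' \cup (x+B'') = A+_tB$ should follow from a careful structural analysis using Item 4 of Proposition \ref{mainprop} together with the Pigeonhole Bound on $H$-cosets to control the elements of $x+B''$ lying outside $A''+B'' = A+_{t+1}B$. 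Establishing this would contradict the standing hypothesis that \eqref{conclusion} fails for $(A,B)$, thereby forcing $\ell = t-1$ and completing the argument.
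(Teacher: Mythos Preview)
Your argument is correct and clean up through the identification $A+_{t+1}B = \tilde A+_tB = A''+B''$ (a nice observation, slightly slicker than the paper's treatment of the case $x\in A''+H$) and the disposal of the cases $x+B''\subseteq A''+B''$ and $\ell=t-1$. The gap is in the final subcase, $\ell\le t-2$ with $x+B''\not\subseteq A''+B''$: your proposed witness $(A''\cup\{x\},B'')$ \emph{cannot} satisfy \eqref{conclusion}. Take any $g\in(x+B'')\setminus(A''+B'')$, which exists in this subcase. Then $g\in(A''\cup\{x\})+B''$, but since $g\notin A''+B''$ the only representation of $g$ over $(A''\cup\{x\})\times B''$ uses $x$, so $\mathsf r_{A''\cup\{x\},\,B''}(g)=1<t$. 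Hence $(A''\cup\{x\})+_tB''\neq(A''\cup\{x\})+B''$, and no structural analysis via Item~4 or the Pigeonhole Bound can repair this: the failure is forced by a single representation count.

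The correct move (and what the paper does) is to use $(A'',B'')$ itself as the witness for $(A,B)$, not $(A''\cup\{x\},B'')$. The size bound $|A\setminus A''|+|B\setminus B''|=\ell+1\le t-1$ is immediate. The substantive point is that $A''+B''=A+_tB$, i.e., $\tilde A+_tB=A+_tB$. The missing idea is this: any $e$ with $e\notin A''+B''$ satisfies $\mathsf r_{\tilde A,\,B}(e)\le|\tilde A\setminus A''|+|B\setminus B''|=\ell$, since every representation $e=a+b$ with $a\in\tilde A$, $b\in B$ must use $a\notin A''$ or $b\notin B''$. Therefore $\tilde A+_{\ell+1}B=\tilde A+_tB$, and since $\ell+1\le t-1$ this gives in particular $\tilde A+_{t-1}B=\tilde A+_tB$. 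Now $A+_tB\subseteq\tilde A+_{t-1}B=\tilde A+_tB\subseteq A+_tB$, so equality holds and \eqref{conclusion} is established for $(A,B)$, the desired contradiction.
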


\begin{proof}[Proof of \ref{claimB}]

    First, assume by contradiction that
    \begin{align}
        |(x+B)\setminus (A+_{t+1}B)|\geq t \label{claima1}
    \end{align} for some $x\in A$. Note $ |(x+B)\setminus (A+_{t+1}B)|$
    counts the number of elements in $x+B$ each having at most $t$ representations in $A+B$. So \eqref{claima1} implies that there are at least $t$ such elements.

    We have $|A\setminus \{x\}|,\, |B|\ge t$ because $|A|,\, |B|\ge t+1$, while $\sum_{i=1}^t |(A\setminus\{x\})+_i B|<\sum_{i=1}^t |A+_i B|$.
    We also have
    \begin{align*}
        \sum_{i=1}^t |(A\setminus \{x\})+_i B|< t|A\setminus\{x\}|+t|B|+\biggl\lceil
        -\frac{4}{3}t^2+\frac{2}{3}t\biggr\rceil.
    \end{align*}
    Otherwise,  by \eqref{claima1}, we have
        $\sum_{i=1}^t |A+_i B|\ge t|A\setminus\{x\}|+t|B|+\lceil
        -\frac{4}{3}t^2+\frac{2}{3}t\rceil+t
        =t|A|+t|B|+\lceil-\frac{4}{3}t^2+\frac{2}{3}t\rceil,$
    which contradicts \eqref{tbound}. Thus the induction hypothesis and Proposition \ref{mainprop} can be applied to $A\setminus \{x\}$ and $B$. Accordingly, \be\label{tagyourit}A'+B'=A'+_{t}B'=(A\setminus \{x\})+_tB\ee for some $A'\subseteq A\setminus \{x\}$ and $B'\subseteq B$ with
        \be\label{peekaboo}\ell:=|(A\setminus \{x\})\setminus A'|+|B\setminus B'|\le t-1.\ee
     Let
        $H=\mathsf{H}\bigl((A\setminus \{x\})+_t B\bigr)$. Applying Item 1 in Proposition \ref{mainprop} to $A\setminus \{x\}+B$ and replacing $A'$ and $B'$ by the resulting $A''$ and $B''$, we can w.l.o.g.~assume that $A'=(A'+H)\cap (A\setminus \{x\})$ and $B'=(B'+H)\cap B$.

Suppose $x\in A'+H$. Then by \eqref{tagyourit}, we have
    \begin{align*}
        x+B'\subseteq A'+H+B'=A'+B'=(A\setminus \{x\})+_t B,
    \end{align*}
    which means every element in $x+B'$ has at least $t$ representations in $(A\setminus \{x\})+B$, and thus each of them has at least $t+1$ representations in $A+B$. Hence, in $x+B$, there are at most $|x+(B\setminus B')|=|B\setminus B'|\le \ell\le t-1$ elements, each with at most $t$ representations in $A+B$, contrary to our assumption \eqref{claima1}. So we must instead  have $x\notin A'+H$ and $A'=(A'+H)\cap (A\setminus \{x\})=(A'+H)\cap A$.

     If $\ell=t-1$,  then we can apply Item 6 in Proposition \ref{mainprop} to  $A\setminus \{x\}$ and $B$ using $g=x\notin A'+H$, whence
        $\sum_{i=1}^t |A+_i B|=\sum_{i=1}^t \bigl|\bigl((A\setminus\{x\})\cup\{x\}\bigr)+_i B\bigr|
        \ge t\bigl|(A\setminus\{x\})\cup\{x\}\bigr|+t|B|-t^2
        =t|A|+t|B|-t^2,$
    which contradicts \eqref{tboundaux}. Therefore $\ell\le t-2$.
We will
 show that, using the same $A'$ and $B'$, \eqref{conclusion} holds for $A$ and $B$, contradicting that we have assumed this failed by assumption.

    By \eqref{peekaboo} and $\ell\le t-2$, we have
    \begin{align*}
        |A\setminus A'|+|B\setminus B'|=|(A\setminus\{x\})\setminus A'|+1+|B\setminus B'|=\ell+1\le t-1,
    \end{align*}
    so the first inequality from \eqref{conclusion} holds for $A$ and $B$.

    By \eqref{tagyourit}, any  $e\in(A\setminus \{x\})+B$ with  $\mathsf r_{A\setminus \{x\},\,B}(e)\le t-1$ has $e\notin A'+B'$. So by \eqref{peekaboo}, we have $\mathsf r_{A\setminus \{x\},\,B}(e)\le \ell$. Thus
    \begin{align*}
        (A\setminus \{x\})+_t B=(A\setminus \{x\})+_{\ell+1} B,
    \end{align*}
    which further implies $(A\setminus \{x\})+_t B=(A\setminus \{x\})+_{t-1} B$ as $\ell\le t-2$. So, now we have
    \begin{align*}
        A'+B'=A'+_t B'=(A\setminus \{x\})+_t B=(A\setminus \{x\})+_{t-1} B.
    \end{align*}
    But we also have $A+_t B\subseteq (A\setminus \{x\})+_{t-1} B$, so $A+_t B\subseteq A'+_t B'$, and thus $A'+B'=A'+_t B'=A+_t B$, which means \eqref{conclusion} holds for $A$ and $B$, contrary to assumption. This shows that  $|(x+B)\setminus (A+_{t+1}B)|\le t-1$. The assumption that $|A|\ge |B|$ is not used in this proof, so by symmetry, the same argument also shows $|(A+y)\setminus (A+_{t+1}B)|\le t-1$ for any $y\in B$.
\end{proof}

We construct a dot grid with $|A+B|$ rows and $|B|$ columns, where each row corresponds to an element in $A+B$, and on the row corresponding to $x\in A+B$, we place a dot on column $i$ for each $i\in [1,\, \mathsf r_{A,\,B}(x)]$. Then, for any $x\in A+B$, the number of dots on the row corresponding to $x$ is the number of representations of $x$ in the sumset $A+B$; and for any $j\in [1,\, |B|]$, the number of dots on the $j$-th column is equal to $|A+_j B|$. Up to reordering the elements in $A+B$, we may assume that the row lengths are sorted in descending order, with the first row being the longest. An example with $|B|=6$ and $|A+B|=13$ is shown in Figure \ref{ex1}.

\begin{figure}[H]
    \centering
    \input ex1.tex
\end{figure}

We draw a line (the blue line in Figure \ref{ex2}) between the $t$-th and the $(t+1)$-th column. Then $\sum_{i=1}^t |A+_i B|$ counts the number of dots to the left of this line. Let
\begin{align*}
    X:= A+_{t+1} B.
\end{align*}
Then for an element $x\in A+B$, we have $x\in X$ if and only if the length of the row corresponding to $x$ is at least $t+1$. There are at most $|X|(|B|-t)$ dots to the right of this line and thus not counted by $\sum_{i=1}^t |A+_i B|$. In this dot grid, if there is not a dot at $(i,\ j)$, then we say there is a \emph{hole} at $(i,\ j)$. Let
\begin{align*}
    y:=|X||B|-\sum_{x\in X}\mathsf r_{A,\,B}(x)
\end{align*}
be the number of holes in the $|X|\times (|B|-t)$ rectangle from row $1$ to row $|X|$ and from column $t+1$ to column $|B|$, which is illustrated by the red dashed rectangle in Figure \ref{ex2}.

\begin{figure}[H]
    \centering
    \input ex2.tex
\end{figure}

There are $|A||B|$ total dots, and exactly $|X|(|B|-t)-y$ dots not counted by $\sum_{i=1}^t |A+_i B|$, so
\begin{align}\label{holes}
    \sum_{i=1}^t |A+_i B|=|A||B|-|X|(|B|-t)+y.
\end{align}

We define a bipartite graph $\Gamma_t=\Gamma_t(A,\ B)$ with vertex partite classes $A$ and $B$ and an edge between $a\in A$ and $b\in B$ when $a+b\notin X$. Let $E:=\{(a,\ b): a\in A,\ b\in B,\ a+b\notin X\}$ be the edge set of $\Gamma_t$. Then $|E|$ is the number of dots in the $(|A+B|-|X|)\times t$ rectangle from row $|X|+1$ to row $|A+B|$ and from column $1$ to column $t$, which is illustrated by the purple dashed rectangle in Figure \ref{ex3}. So, we have
\begin{align}\label{tXE}
    \sum_{i=1}^t |A+_i B|=t|X|+|E|.
\end{align}

\begin{figure}[H]
    \centering
    \input ex3.tex
\end{figure}

By \ref{claimB}, we have
\begin{align}\label{claima111222}
    |(a+B)\cap X|\ge |B|-t+1 \quad \text{and} \quad |(A+b)\cap X|\ge |A|-t+1
\end{align}
for any $a\in A$ and $b\in B$.

Since
\begin{align*}
    \sum_{i=1}^{t-1} |A+_i B|=\sum_{i=1}^t |A+_i B|-|A+_t B|\le \sum_{i=1}^t |A+_i B|-|A+_{t+1} B|=\sum_{i=1}^t |A+_i B|-|X|,
\end{align*}
combining \ref{claimA} and \eqref{tbound} yields
\begin{align}\label{upperx}
    |X|\leq |A|+|B|-\frac{8}{3}t+\frac53.
\end{align}

Also, combining \eqref{holes} and \eqref{tbound}, we have
\begin{align}\label{lowerx}
    |X|\geq \frac{|A||B|-t|A|-t|B|+\frac{4}{3}t^2-\frac{2}{3}t+\frac{1}{3}+y}{|B|-t}.
\end{align}

\begin{claim}\label{claimC} $|B|\ge 2t-1$.
\end{claim}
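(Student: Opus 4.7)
The plan is to assume by contradiction that $|B|\le 2t-2$ and derive a contradiction by sandwiching $|X|$ between the upper bound \eqref{upperx} (obtained via \ref{claimA} together with \eqref{tbound}) and the lower bound \eqref{lowerx} (obtained from the dot-grid identity \eqref{holes} and \eqref{tbound}). The guiding intuition is that, as $|B|$ moves down towards $t+1$, the lower bound on $|X|$ blows up like $1/(|B|-t)$, while the upper bound only grows linearly in $|B|$; these two behaviors cannot be reconciled once $|B|$ is close enough to $t$.

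Concretely, I would first simplify \eqref{lowerx} by writing $|A||B|-t|A|-t|B|=(|A|-t)(|B|-t)-t^2$, which turns the numerator into $(|A|-t)(|B|-t)+\tfrac{1}{3}(t-1)^2+y$. Dividing by $|B|-t>0$ and discarding the nonnegative $y$-term yields
\begin{align*}
|X|\;\ge\;|A|-t+\frac{(t-1)^2}{3(|B|-t)}.
\end{align*}
Combining with \eqref{upperx} and clearing the positive denominator reduces the problem to the purely numerical inequality $(|B|-t)(3|B|-5t+5)\ge(t-1)^2$.

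Substituting $s:=|B|-t$, which under the contrary hypothesis $t+1\le|B|\le 2t-2$ satisfies $1\le s\le t-2$, the inequality becomes $3s^2-(2t-5)s-(t-1)^2\ge 0$. Since the left-hand side is a convex quadratic in $s$, its maximum over $[1,t-2]$ is attained at an endpoint. A short computation gives the values $7-t^2$ at $s=1$ and $1-t$ at $s=t-2$; both are strictly negative for $t\ge 3$, which is exactly the range handled by the induction. This contradicts the required inequality and forces $|B|\ge 2t-1$.

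I do not expect any serious obstacle here: the heavy lifting has already been absorbed into \ref{claimA} and \ref{claimB}, and what remains is the dot-grid bookkeeping above plus a one-variable convexity check. The tightest point of the argument is the $s=1$ endpoint, where $7-t^2$ only just becomes negative at $t=3$, so the constants in \eqref{upperx} and \eqref{lowerx} should not be slackened.
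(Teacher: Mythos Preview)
Your proposal is correct and follows essentially the same approach as the paper: both sandwich $|X|$ between \eqref{upperx} and \eqref{lowerx} and derive a contradiction by reducing to a one-variable inequality on the interval $[1,t-2]$ (the paper uses the variable $u=|B|-t$ rather than your $s$). The only cosmetic difference is that the paper keeps the inequality in the form $f(u)=\frac{(t-1)^2}{3u}-u+\frac{2}{3}t-\frac{5}{3}\le 0$ and uses monotonicity of $f$ to check only the endpoint $u=t-2$, whereas you clear the denominator to a convex quadratic and check both endpoints---the computations are equivalent.
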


\begin{proof}[Proof of \ref{claimC}]
    Assume to the contrary that $|B|\le 2t-2$. Let
    \begin{align*}
        |B|=t+u.
    \end{align*}
    Then $1\le u\le t-2$. Combining \eqref{upperx} and \eqref{lowerx}, we have
    \begin{align*}
        \frac{u(|A|-t)+\frac{1}{3}t^2-\frac{2}{3}t+\frac{1}{3}+y}{u}\leq |X|
        \leq |A|-\frac{5}{3}t+u+\frac53.
    \end{align*}
    As $y\ge 0$, this implies
    \begin{align}\label{fu}
        f(u):=\frac{\frac{1}{3}t^2-\frac{2}{3}t+\frac{1}{3}}{u}-u+\frac{2}{3}t-\frac53
        \leq0,
    \end{align}
    where $f(u)$ is defined on $[1,\, t-2]$. As $\frac{1}{3}t^2-\frac{2}{3}t+\frac{1}{3}\ge 0$, we know that $f(u)$ attains its minimum when $u=t-2$. However, as $t\ge 3$, we have
    \begin{align*}
        f(u)\ge f(t-2)=\frac{\frac{1}{3}t^2-\frac{2}{3}t+\frac{1}{3}}{t-2}
        -t+2+\frac{2}{3}t-\frac53=\frac{t-1}{3(t-2)}>0,
    \end{align*}
    contradicting \eqref{fu}. So we can conclude that $|B|\ge 2t-1$.
\end{proof}

If $A+_t B=A+B$, then we can simply take $A'=A$ and $B'=B$, yielding \eqref{conclusion}, contrary to assumption. Thus we may assume that
\begin{align}\label{notequal}
    A+_t B\subsetneq A+B.
\end{align}
Thus $\mathsf r_{A,B}(g_0)\leq t-1$ for some $g_0\in A+B$, whence Proposition \ref{multi} implies
\begin{align}\label{lowermulti}
    |A+B|\ge |A|+|B|-t+1.
\end{align}

Observe that $\mathsf r_{A,B}(x)=|A\cap (x-B)|=|(-x+A)\cap -B|=|(-y+A)\cap -B|=|A\cap (y-B)|=\mathsf r_{A,B}(y)$ for any $y\in x+ \mathsf H(A)$. In particular, any $\mathsf H(A)$-coset is either fully disjoint from $A+_{t+1}B$ or fully contained in $A+_{t+1}B$.

Suppose $|\mathsf{H}(A)|\ge t$. If there exists $a\in A$ and $b\in B$ with $\mathsf r_{A,\,B}(a+b)\le t$, then
\begin{align*}
    |(A+b)\setminus (A+_{t+1} B)|\ge |(a+\mathsf{H}(A))+b|\ge t,
\end{align*}
contradicting \ref{claimB}. On the other hand, if $\mathsf r_{A,\,B}(a+b)\ge t+1$ for all $a\in A$ and $b\in B$, then
\begin{align*}
    A+B=A+_{t+1} B=A+_t B,
\end{align*}
which contradicts \eqref{notequal}.
So we instead conclude that
\begin{align}\label{HA}
    |\mathsf{H}(A)|\le t-1.
\end{align}

For  any subgroup $G'\leq G$, let $\phi_{G'}: G\longrightarrow G/G'$ denote the natural homomorphism. We use the symbol $\sqcup$ to indicate a union that is  disjoint.

\begin{claim}\label{claimE}
If    $|\phi_K(B)|=1$ for some subgroup  $K\le G$, then  $|\phi_K(A)|=1$.
\end{claim}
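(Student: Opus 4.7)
The plan is to argue by contradiction, assuming $|\phi_K(A)| = k \geq 2$. I would partition $A = A_1 \sqcup \cdots \sqcup A_k$ by its intersections with distinct $K$-cosets. Since $B \subseteq b_0 + K$ sits in a single $K$-coset, each $A_j + B$ lies in its own $K$-coset $a_j + b_0 + K$, so the $A_j + B$ are pairwise disjoint and representation counts localize: $\mathsf r_{A,B}(g) = \mathsf r_{A_j, B}(g)$ for $g \in A_j + B$, and hence $A +_i B = \bigsqcup_j A_j +_i B$ for every $i$. Applying \ref{claimB} to any $a \in A_j$, together with $(A +_{t+1} B) \cap (a_j + b_0 + K) = A_j +_{t+1} B$, forces $|A_j +_{t+1} B| \geq |B| - t + 1 \geq 1$, which in turn requires $|A_j| \geq t + 1$ for each $j$.

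Next, for each $j$, since $\sum_i |A_{j'} +_i B| \geq |A_{j'} + B| > 0$ for any $j' \neq j$, the tuple $(t, \sum_i |A_j +_i B|, -(|A_j| + |B|), \min\{|A_j|, |B|\})$ is strictly smaller in lex order than that of $(A, B)$, so the induction hypothesis of Theorem \ref{new} applies to each pair $(A_j, B)$. If every $(A_j, B)$ failed \eqref{hypothesis}, summing $\sum_i |A_j +_i B| \geq t|A_j| + t|B| + \lceil -\tfrac{4}{3}t^2 + \tfrac{2}{3}t \rceil$ over $j$ and comparing with \eqref{hypothesis} for $(A, B)$ would yield $(k - 1)\bigl(t|B| + \lceil -\tfrac{4}{3}t^2 + \tfrac{2}{3}t \rceil\bigr) < 0$, hence $|B| < \tfrac{4}{3}t - \tfrac{2}{3}$, contradicting $|B| \geq 2t - 1$ from \ref{claimC}. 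Thus some index $j_0$ satisfies \eqref{hypothesis}, and induction together with Proposition \ref{mainprop} yields $A'_{j_0} \subseteq A_{j_0}$ and $B'_{j_0} \subseteq B$ with $|A_{j_0} \setminus A'_{j_0}| + |B \setminus B'_{j_0}| \leq t - 1$, $A'_{j_0} + B'_{j_0} = A'_{j_0} +_t B'_{j_0} = A_{j_0} +_t B$, and $H_{j_0} := \mathsf H(A_{j_0} +_t B) \leq K$ (forced since $A_{j_0} +_t B$ sits inside one $K$-coset and its stabilizer must preserve that coset).

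The hard part will be promoting this inductive conclusion to a structural result for $(A, B)$ that contradicts the standing assumption that \eqref{conclusion} fails for $(A, B)$. The natural candidate $A' := A'_{j_0} \cup \bigcup_{i \neq j_0} A_i$ and $B' := B'_{j_0}$ satisfies $|A \setminus A'| + |B \setminus B'| \leq t - 1$, and because $H_{j_0} \leq K$ it correctly recovers $A +_t B$ inside the coset $a_{j_0} + b_0 + K$. In the other cosets $a_i + b_0 + K$ with $i \neq j_0$, however, the representation count is $\mathsf r_{A', B'}(g) = \mathsf r_{A_i, B'_{j_0}}(g) \geq \mathsf r_{A_i, B}(g) - |B \setminus B'_{j_0}|$, which may fall below $t$ when $B'_{j_0} \subsetneq B$, so the lift is not automatic. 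My intended route is to invoke Item 6 of Proposition \ref{mainprop} applied to $(A_{j_0}, B)$ with $g \in A_i$ for some $i \neq j_0$: such $g$ lies automatically outside $A'_{j_0} + H_{j_0}$ (since $A_i$ and $A_{j_0}$ sit in different $K$-cosets and $H_{j_0} \leq K$), producing a sharper bound on $\sum_i |(A_{j_0} \cup \{g\}) +_i B|$. Reassembling this with the contributions of the remaining $A_{i'}$'s (and bounding those via the already-established $|A_{i'}| \geq t+1$ together with \ref{claimB}) should then drive $\sum_i |A +_i B|$ strictly above $t|A| + t|B| + \lceil -\tfrac{4}{3}t^2 + \tfrac{2}{3}t \rceil$, contradicting \eqref{hypothesis} and completing the argument.
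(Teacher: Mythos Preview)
Your decomposition and the use of \ref{claimB} to force $|A_j|\ge t+1$ are correct. The gap is in your dichotomy and the attempted promotion. You argue that if \emph{every} $(A_j,B)$ fails \eqref{hypothesis} then summing contradicts \ref{claimC}; this secures the induction hypothesis only for \emph{some} $j_0$, and the lift of that single piece to $(A,B)$ cannot be completed along the lines you sketch. Item~6 of Proposition~\ref{mainprop} requires $\ell_{j_0}=t-1$, which you have not shown (nor even $\ell_{j_0}\ge 1$, since $j_0$ is chosen merely to satisfy \eqref{hypothesis}, not to witness $A_{j_0}+B\neq A_{j_0}+_t B$). Even granting $\ell_{j_0}=t-1$, note that for $g$ in a different $K$-coset one has $\sum_i |(A_{j_0}\cup\{g\})+_i B|=\sum_i|A_{j_0}+_i B|+|B|$, so Item~6 only yields $\sum_i|A_{j_0}+_i B|\ge t|A_{j_0}|+(t-1)|B|-t^2+t$. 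Combined with any bound of the shape $\sum_i|A_{i'}+_i B|\ge t|A_{i'}|+O(t)$ for the remaining slices, this gives $\sum_i|A+_i B|\ge t|A|+(t-1)|B|+O(t^2)$, which is one factor of $|B|$ short of contradicting \eqref{hypothesis} when $|B|$ is large.

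The paper runs the dichotomy the other way, and this is the missing idea. The elementary estimate $\sum_{j=1}^t|A_i+_j B|\ge t|A_i|$ holds whenever $|B|\ge t$: pick any $b_1,\ldots,b_t\in B$ and note that the multiset $\{a+b_k: a\in A_i,\ k\in[1,t]\}$ has $t|A_i|$ elements while each $g$ occurs with multiplicity at most $\min(\mathsf r_{A_i,\,B}(g),t)$. Hence if even a \emph{single} index $i_0$ has $\sum_j|A_{i_0}+_j B|\ge t|A_{i_0}|+t|B|+\lceil-\tfrac43 t^2+\tfrac23 t\rceil$, summing this with the trivial bound for $i\neq i_0$ already contradicts \eqref{hypothesis} for $(A,B)$. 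Therefore \emph{every} $(A_i,B)$ satisfies \eqref{hypothesis} and inherits \eqref{conclusion} by induction. Now \eqref{notequal} forces some $j$ with $A_j+_t B\subsetneq A_j+B$, hence $\ell_j\ge 1$; Item~4 (not Item~6) of Proposition~\ref{mainprop} then produces $a\in A_j\setminus A'_j$ or $b\in B\setminus B'_j$ with at least $|H_j|-\rho_j\ge t+1$ translates outside $A_j+_t B$, directly contradicting \ref{claimB}. No promotion to $(A,B)$ is needed.
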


\begin{proof}[Proof of \ref{claimE}]
This claim is trivial if $K=G$, so it suffices to prove it for $K<G$. Assume to the contrary that, for a proper subgroup $K<G$, we have $|\phi_K (A)|=r\ge 2$ and $|\phi_K (B)|=1$. Let $A=\bigsqcup_{i=1}^r A_i$ be the $K$-coset decomposition of $A$, where each $A_i$ is a $K$-coset slice, that is, a nonempty intersection of $A$ with a $K$-coset.

If there exists $i\in [1,\, r]$ such that $|A_{i}|\le t-1$, then for an arbitrary $a_i\in A_{i}$ and every $b\in B$, we have $\mathsf r_{A,\,B}(a_i+b)=\mathsf r_{A_{i},\,B}(a_i+b)\le t-1$. Thus $|(a_i+B)\setminus (A+_{t+1} B)|\ge |B|\geq t+1$, which contradicts \ref{claimB}.
Therefore we instead conclude that  $|A_{i}|\ge t$ for every $i\in [1,\, r]$.

We can now apply the induction hypothesis to each pair of $A_{i}$ and $B$, as $|A_{i}+_j B|\leq |A+_j B|$ for any $j\ge 1$ with strict inequality for $j=1$ since $r\geq 2$. If there exists $i_0\in [1,\, r]$ such that  $\sum_{j=1}^t |A_{i_0}+_j B|\ge t|A_{i_0}|+t|B|+\lceil-\frac{4}{3}t^2+\frac{2}{3}t\rceil$, then combining this  with the fact that $\sum_{j=1}^t |A+_j B|=\sum_{i=1}^r \Bigl(\sum_{j=1}^t |A_{i}+_j B|\Bigr)$ and using the trivial estimate  $\sum_{j=1}^t |A_{i}+_j B|\ge t|A_{i}|$ whenever $ i\neq i_0$, it follows that
$\sum_{j=1}^t |A+_j B|\ge t|A|+t|B|+\lceil-\frac{4}{3}t^2+\frac{2}{3}t\rceil$, contradicting \eqref{hypothesis}.
Therefore we can assume that, for any $i\in [1,\, r]$, \eqref{conclusion} and \eqref{conclusion-impliedbound} hold for  $A_{i}$ and $B$, say with $A_{i}'\subseteq A_{i}$, \ $B_i'\subseteq B$, \ $\ell_i$, \ $H_i$ and $\rho_i$ being the parameters which correspond to $A'$,  \ $B'$, \ $\ell$,\  $H$, and $\rho$ in Theorem \ref{new}. By Items 1 and 5 in Proposition \ref{mainprop}, we can w.l.o.g.~assume that $A_{i}'=(A_{i}'+H_i)\cap A_{i}$ and $B_i'=(B_i'+H_i)\cap B$ with  $|H_i|-\rho_i\ge t+1$, for all $i\in [1,r]$.

If  $A_{i}+B=A_{i}+_t B$ for every $i\in [1,\, r]$, then  $A+B=\Bigl(\bigsqcup_{i=1}^r A_{i}\Bigr)+B=\bigsqcup_{i=1}^r (A_{i}+B)=\bigsqcup_{i=1}^r (A_{i}+_t B)=\Bigl(\bigsqcup_{i=1}^r A_{i}\Bigr)+_t B=A+_t B$, which contradicts \eqref{notequal}. Thus, we can assume that there exists $j\in [1,\, r]$ such that $A_{j}+_t B\subsetneq A_{j}+B$, which forces $\ell_j\ge 1$ since \eqref{conclusion} ensures that $A_j+_tB=A'_j+B'$.
Since $\ell_j\geq 1$, there is either some $a\in A_j\setminus A'_j$ or some $b\in B\setminus B'$. By Items 4 and 5 in Proposition \ref{mainprop},  we either have $|(a+B)\setminus (A+_{t+1} B)|=|(a+B)\setminus (A_{j}+_{t+1} B)|\ge |(a+B)\setminus (A_{j}+_t B)|\ge |H_j|-\rho_j\ge t+1$ or  $|(A+b)\setminus (A+_{t+1} B)|\ge |(A_j+b)\setminus (A_j+_{t+1} B)|\geq t+1$, both contradicting \ref{claimB}.
\end{proof}

Now we apply the Dyson Transform.
For $z\in A-B$, let $A(z)=A\cup (z+B)$ and $B(z)=A\cap (z+B)$. Note $A-B$ is precisely the set of all elements $z$ with $B(z)\neq \emptyset$.  Now
\begin{align}\label{azbz}
    |A(z)|+|B(z)|=|A|+|B|.
\end{align}
Also, for any $g\in G$, we have
\begin{align*}
    \mathsf r_{A,\,z+B}(g)&=\mathsf r_{A,\,A\cap(z+B)}(g)+\mathsf r_{A,\,(z+B)\setminus A}(g)\\
    &=\mathsf r_{A,\,A\cap(z+B)}(g)+\mathsf r_{A\cap(z+B),\,(z+B)\setminus A}(g)+\mathsf r_{A\setminus(z+B),\,(z+B)\setminus A}(g)\\
    &=\mathsf r_{A\cup(z+B),\,A\cap(z+B)}(g)+\mathsf r_{A\setminus(z+B),\,(z+B)\setminus A}(g)\\
    &=\mathsf r_{A(z),\,B(z)}(g)+\mathsf r_{A\setminus(z+B),\,(z+B)\setminus A}(g)\\
    &\ge \mathsf r_{A(z),\,B(z)}(g),
\end{align*}
so for any $i\ge 1$, we have
\begin{align}\label{azbz2}
    A(z)+_i B(z)\subseteq A+_i (z+B)=z+(A+_i B).
\end{align}

If, for every $z\in A-B$, we have $|B(z)|=|B|$, meaning $z+B\subseteq A$, then $A-B+B\subseteq A$, so $A-B+B=A$, which implies $-B+B\subseteq \mathsf{H}(A)$, and thus $|\mathsf{H}(A)|\ge |-B+B|\ge |B|\ge t+1$, which contradicts \eqref{HA}. Therefore there exists $z\in A-B$ such that $|B(z)|\le |B|-1$. Let  $z'\in A-B$ be an element such that
\begin{align*}
    |B(z')|=\max\{|B(z)|: \; z\in A-B,\ 1\le B(z)\le |B|-1\}.
\end{align*}
Then, for our convenience, by replacing $B$ with $B+z'$, we can assume that
\begin{align*}
    |B(0)|=\max\{|B(z)|:\;  z\in A-B,\ 1\le B(z)\le |B|-1\}.
\end{align*}
We would like to mention that, in fact, $A(0)=A\cup B$ and $B(0)=A\cap B$.

\begin{claim}\label{claimD}
$|B(0)|\ge t$.
\end{claim}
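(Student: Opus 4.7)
The plan is to argue by contradiction: assume $b := |B(0)| \le t-1$. The maximality of $|B(0)|$ forces every $z \in A-B$ to satisfy either $z+B \subseteq A$ (placing $z$ in $S := \{z \in G : z+B \subseteq A\}$) or $|B(z)| \le t-1$ (placing $z$ in $T := (A-B) \setminus S$). I plan to derive a contradiction by combining the resulting structural information with the constraints already established.

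The first step is to eliminate the degenerate case $T = \emptyset$. If $T = \emptyset$, then $A - B = S$ gives $(A-B) + B \subseteq A$; since $(A-B) + B = A + (B-B)$, this forces $B - B \subseteq \mathsf{H}(A)$. Kneser's Theorem applied to $B + (-B)$ yields $|B-B| \ge 2|B| - |\mathsf{H}(B-B)| \ge 2|B| - |B-B|$, hence $|B-B| \ge |B|$, while $|B-B| \le |\mathsf{H}(A)| \le t-1$ by \eqref{HA} forces $|B| \le t-1$, contradicting $|B| \ge t+1$. Thus $T \ne \emptyset$, and Proposition \ref{multi} applied to $(A, -B)$ (using $|B(z)| = \mathsf{r}_{A,-B}(z) \ge |A|+|B|-|A-B|$ for $z \in T$, combined with $|B(z)| \le t-1$) yields $|A-B| \ge |A|+|B|-t+1$.

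Next, I intend to extract two contrasting bounds on $\sum_{i=1}^t |A+_iB|$. Applying the Dyson Transform at $z = 0$, the containment $A(0)+_i B(0) \subseteq A+_i B$ together with $\mathsf r_{A(0),B(0)}(g) \le b \le t-1 < t$ produces
\begin{align*}
\sum_{i=1}^t|A+_iB| \;\ge\; \sum_{i=1}^t |A(0)+_iB(0)| \;=\; |A(0)||B(0)| \;=\; b(|A|+|B|-b),
\end{align*}
while Claim B's bound of $t-1$ on vertex degrees in $\Gamma_t$ gives $|E| \le (t-1)|B|$ and hence, via \eqref{tXE}, $\sum_{i=1}^t|A+_iB| \le t|X| + (t-1)|B|$. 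A similar computation with the $S/T$ partition shows $\sum_{i=1}^t|A+_i(-B)| = |A||B| - |S|(|B|-t)$.

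The contradiction is then derived by a case split on $|S|$. When $|S|$ is large enough that $\sum_{i=1}^t|A+_i(-B)| < \sum_{i=1}^t|A+_iB|$, the pair $(A,-B)$ lies strictly earlier than $(A,B)$ in the lexicographic order on $(t, \sum_i|A+_iB|, -(|A|+|B|), \min\{|A|,|B|\})$, so the induction hypothesis produces subsets $A' \subseteq A$ and $B' \subseteq -B$ satisfying the analogue of \eqref{conclusion} for $(A,-B)$; Item 4 of Proposition \ref{mainprop} then gives $|A|-t$ popular sums outside $A+_{t+1}(-B) = S$ concentrated on few translates, which when transported back via the identity $|B(z)| = \mathsf r_{A,-B}(z)$ contradicts the maximality of $|B(0)|$ (or equivalently the assumed failure of \eqref{conclusion} for $(A,B)$). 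When $|S|$ is small (in particular $|S|=0$, where $|A-B|(t-1) \ge |A||B|$), the identity $\sum_{z \in T}|B(z)| = (|A|-|S|)|B|$ together with $|A-B| \ge |A|+|B|-t+1$, Claim C ($|B| \ge 2t-1$), and the inequalities \eqref{tbound}, \eqref{upperx}, \eqref{lowerx} should produce a direct arithmetic contradiction. I expect the main obstacle to be this case analysis, since neither the Dyson lower bound nor the Claim B upper bound alone suffices to rule out all parameter ranges consistent with Claim C, so the delicate numerical balance between the two extremes of $|S|$ will require careful handling.
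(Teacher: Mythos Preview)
Your proposal has genuine gaps in both branches of the case split on $|S|$, and the approach does not match the paper's.

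\textbf{Large $|S|$ branch.} Applying the induction hypothesis to $(A,-B)$ yields structural information about $A+_t(-B)\subseteq A-B$, not about $A+_tB\subseteq A+B$. You assert that Item~4 of Proposition~\ref{mainprop} for $(A,-B)$ can be ``transported back via the identity $|B(z)|=\mathsf r_{A,-B}(z)$'' to contradict either the maximality of $|B(0)|$ or the assumed failure of \eqref{conclusion} for $(A,B)$, but no mechanism is given. The subsets $A'\subseteq A$, $B'\subseteq -B$ with $A'+B'=A+_t(-B)=S$ tell you about the difference set; knowing that $S$ is periodic and that certain translates miss $S$ says nothing about $A+_tB$, and the maximality of $|B(0)|$ only constrains values $|B(z)|$ in the range $[1,|B|-1]$, which is already encoded in your $S/T$ partition.

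\textbf{Small $|S|$ branch.} You invoke \eqref{upperx} and \eqref{lowerx}, but these bound $|X|=|A+_{t+1}B|$, whereas your quantities $|S|$, $|T|$, $|A-B|$ live on the difference side. No link between the two is provided, and the promised ``direct arithmetic contradiction'' is never produced. The Dyson lower bound $\sum_i|A+_iB|\ge b(|A|+|B|-b)$ with $b\le t-1$ is far too weak on its own (for $b$ small it is tiny), and the upper bound $t|X|+(t-1)|B|$ from Claim~B combined with \eqref{upperx} does not close the gap.

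The paper's argument proceeds entirely differently. It introduces the parameter $s=\min_{b\in B}|(b+A)\setminus X|$ (the minimum $B$-degree in $\Gamma_t$), translates so that $0\in B$ attains this minimum (giving $|A\cap X|=|A|-s$ and $|E|\ge s|B|$), and defines $T=\{x\in A:\,x+B\subseteq A\}\subseteq A$. Using \eqref{inA}, Kneser's Theorem, and Claim~E, it proves $|T|\le |A|-|B|+t-1$, so $|A\setminus T|\ge |B|-t+1\ge t$. Choosing $t$ elements $x_1,\dots,x_t\in A\setminus T$ and carefully counting contributions from $A\cap X$, from $(x_j+B)\setminus A$, and from $E$ yields the key inequality
\[
\sum_{i=1}^t|A+_iB|\;\ge\; t|A|+t|B|-2t(t-1)+(|B|-t)s.
\]
A separate estimate (using $|(x_i+B)\cap(X\setminus A)|\ge |B|-2t+2$ together with \eqref{upperx}) forces $s>\tfrac{2}{3}t$; substituting this and $|B|\ge 2t-1$ contradicts \eqref{tbound}. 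Your proposal is missing the parameter $s$ and this counting inequality, which are the heart of the argument; the induction on $(A,-B)$ is not used and does not appear to be a viable substitute.
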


\begin{proof}[Proof of \ref{claimD}]
Assume to the contrary that $|B(0)|\le t-1$. Then, by the maximality of $|B(0)|$,
\begin{align}\label{inA}
    |(g+B)\cap A|\ge t\quad\mbox{ implies }\quad g+B\subseteq A
\end{align}
for any $g\in G$.

Let
\begin{align}\label{mins}
    s:=\min\{|(b+A)\setminus X|:\; b\in B\},
\end{align}
and let $b_0\in B$ be an element such that the equality in \eqref{mins} is attained. If we translate $A$ and $B$ each by $-b_0$, then we get new sets $A_0:=-b_0+A$, $B_0:=-b_0+B$, and $X_0:=A_0+_{t+1} B_0=-2b_0+X$. As $b_0\in B$ attains the minimum in \eqref{mins}, we have
\begin{align}\label{bAX}
    |(b_0+A)\setminus X|\le |(b+A)\setminus X|
\end{align}
for any $b\in B$. Since $|(b_0+A)\setminus X|=|(-b_0+A)\setminus (-2b_0+X)|=|A_0\setminus X_0|$ and $|(b+A)\setminus X|=|(b-2b_0+A)\setminus (-2b_0+X)|=|(b-b_0+A_0)\setminus X_0|$, \eqref{bAX} implies
\begin{align*}
    |A_0\setminus X_0|\le |(b'+A_0)\setminus X_0|
\end{align*}
for any $b'\in -b_0+B=B_0$. Thus, by replacing $A$ and $B$ with $A_0$ and $B_0$, we can w.l.o.g.~assume that $b_0=0\in B$ attains the minimum in \eqref{mins}. This means
\begin{align}\label{AcapX}
    |A\cap X|=|A|-s.
\end{align}
Note that $s$ is the minimum degree of a vertex from $B$ in the graph $\Gamma_t$ defined earlier, so
\begin{align}\label{EsB}
    |E|\ge s|B|.
\end{align}

Let $T=\{x\in A:\; x+B\subseteq A\}$. Then $T+B\subseteq A$.
Let $\X:=\mathsf{H}(T+B)$. If $x\in T+\X$, then $T$ is nonempty and $x+B\subseteq T+\X+B=T+B\subseteq A$, so $x\in T$ by the definition of $T$. Thus $T$ is $\X$-periodic, whence  $\X\subseteq \mathsf{H}(T)\subseteq \mathsf{H}(T+B)=\X$, so $$\mathsf H(T+B)=\X=\mathsf{H}(T).$$

Now we prove
\begin{align}\label{boundT}
    |T|\le |A|-|B|+t-1,
\end{align}
which is trivially true if $T=\emptyset$ since $|A|\geq |B|$ and $t\geq 1$.

Assume that $T\neq \emptyset$. If $|\phi_\X (B)|=1$, then $|\X|\ge |B|\ge t+1$, and by \ref{claimE}, we have $|\phi_\X (A)|=1$. As $\X=\mathsf{H}(T+B)$ and $T+B\subseteq A$, we know that an $\X$-coset is contained in $A$, which, together with $|\phi_\X (A)|=1$, forces $A$ to equal this $\X$-coset, so $|\mathsf{H}(A)|=|\X|\ge t+1$, which contradicts \eqref{HA}. Therefore $|\phi_\X (B)|\ge 2$ when $T\neq \emptyset$.
Suppose that there exists $b^*\in B$ with $|(b^*+\X)\cap B|\ge t$. We have $B\setminus (b^*+\X)\neq \emptyset$ as $|\phi_\X (B)|\ge 2$. Then, for any $b^{**}\in B\setminus (b^*+\X)$ and any $z\in T$, we have
\begin{align*}
    b^{**}-b^*+z+(b^*+\X)\cap B\subseteq b^{**}+z+\X\subseteq B+T+\X=T+B\subseteq A,
\end{align*}
which implies
\begin{align*}
    \bigl|\bigl(b^{**}-b^*+z+(b^*+\X)\cap B\bigr)\cap A\bigr|=|(b^*+\X)\cap B|\ge t,
\end{align*}
and thus
\begin{align*}
    |(b^{**}-b^*+z+B)\cap A|\ge t.
\end{align*}
So, by \eqref{inA}, we have $b^{**}-b^*+z+B\subseteq A$, and thus $b^{**}-b^*+z\in T$ for all $z\in T$, which further implies that $b^{**}-b^*\in \mathsf{H}(T)=\X$. But this contradicts that $b^{**}\in B\setminus (b^*+\X)$. So we instead concldude that, for any $b\in B$, we have
\begin{align}\label{capt-1}
    |(b+\X)\cap B|\le t-1\quad\mbox{ when $T\neq \emptyset$}.
\end{align}

If \eqref{boundT} fails, then $|T|\ge |A|-|B|+t$, which, together with the fact that $T+B\subseteq A$, implies $|T+B|\le |A|\le |T|+|B|-t$. So, by Kneser's Theorem (Theorem \ref{kneserthm}), we have $|T+B|=|T|+|B|-|\X|+\rho$, where $\rho:=|(T+\X)\setminus T|+|(B+\X)\setminus B|$. Hence
\begin{align*}
    |T|+|B|-t\ge |T+B|=|T|+|B|-|\X|+\rho,
\end{align*}
which means $|\X|-\rho\ge t$, and thus $|(b+\X)\cap B|\ge |\X|-\rho\ge t$ for $b\in B$, which contradicts \eqref{capt-1}. With this, \eqref{boundT} is now established.

By \ref{claimC}, we have $|B|\ge 2t-1$, so \eqref{boundT} implies that
\begin{align*}
    |A\setminus T|=|A|-|T|\ge |B|-t+1\ge t.
\end{align*}
Let $\{x_1,\ x_2,\ ...,\ x_t\}\subseteq A\setminus T$ be a subset of size $t$. By definition of $T$, we have $x_i+B\nsubseteq A$ for each $i\in [1,t]$. Thus \eqref{inA} ensures
\begin{align}\label{xiB}
    |(x_i+B)\setminus A|=|x_i+B|-|(x_i+B)\cap A|\ge |B|-t+1
\end{align}
for any $i\in [1,\, t]$.

Then, as explained below, we have
\begin{align}\label{AtBlower}
    \sum_{i=1}^t |A+_i B|\ge t|A\cap X|+\sum_{j=1}^t |(x_j+B)\setminus A|+|E|-\sum_{k=1}^t |(x_k+B)\setminus (A\cup X)|.
\end{align}
Each element from $A\cap X\subseteq X=A+_{t+1} B$ contributes $t$ to $\sum_{i=1}^t |A+_i B|$. The contributions from each $(x_j+B)\setminus A$ are disjoint from the contributions from $A\cap X$ as they do not live in $A$ while being cumulative as any element from $\Bigl(\bigcup_{j=1}^t (x_j+B)\Bigr)\setminus A$ is counted at most $t$ times (at most once for each $x_j\in A$). Each element from $E$ contributes $1$ to $\sum_{i=1}^t |A+_i B|$, and they are disjoint from all prior contributions except for elements from $(x_k+B)\setminus (A\cup X)$. Thus we need to subtract these elements to avoid double counting.

We have the following estimates.
\begin{itemize}
    \item By \eqref{AcapX}, we have $|A\cap X|=|A|-s$.
    \item By \eqref{xiB}, we have $|(x_j+B)\setminus A|\ge |B|-t+1$ for any $j\in [1,\, t]$.
    \item By \eqref{EsB}, we have $|E|\ge s|B|$.
    \item By \eqref{claima111222}, we have $\sum_{k=1}^t |(x_k+B)\setminus (A\cup X)|\le \sum_{k=1}^t |(x_k+B)\setminus X|\le t(t-1)$.
\end{itemize}
So, in \eqref{AtBlower}, we have
\begin{align}
    \sum_{i=1}^t |A+_i B|&\ge t(|A|-s)+t(|B|-t+1)+s|B|-t(t-1)\nonumber \\
    &=t|A|+t|B|-2t(t-1)+(|B|-t)s. \label{green}
\end{align}

For each $i\in [1,\, t]$, we have $|(x_i+B)\cap (X\setminus A)|+|(x_i+B)\setminus (X\cup A)|=|(x_i+B)\setminus A|\ge |B|-t+1$ by \eqref{xiB}. We have $|(x_i+B)\setminus (X\cup A)|\le |(x_i+B)\setminus X|\le t-1$ by \eqref{claima111222}. Combining both these estimates yields $|X\setminus A|\ge |(x_i+B)\cap (X\setminus A)|\ge |B|-2t+2$, and thus (in view also of \eqref{AcapX})
\begin{align*}
    |X|=|X\cap A|+|X\setminus A|\ge |A|-s+|B|-2t+2,
\end{align*}
which, together with \eqref{upperx}, implies that
\begin{align*}
    s>\frac{2}{3}t.
\end{align*}

Applying the estimate $s>\frac{2}{3}t$ in \eqref{green}, we obtain (since $|B|\geq t+1$)
\begin{align*}
    \sum_{i=1}^t |A+_i B|>t|A|+t|B|-2t(t-1)+(|B|-t)\cdot\frac{2}{3}t.
\end{align*}
Then, as $|B|\ge 2t-1$ by \ref{claimC}, we have
\begin{align*}
    \sum_{i=1}^t |A+_i B|&>t|A|+t|B|-2t(t-1)+(t-1)\cdot\frac{2}{3}t\\
    &=t|A|+t|B|-\frac{4}{3}t^2+\frac{4}{3}t,
\end{align*}
which contradicts \eqref{tbound}. The proof of \ref{claimD} is completed, and we conclude that $|B(0)|\ge t$.
\end{proof}

If $\sum_{i=1}^t|A(0)+_iB(0)|\geq t|A(0)|+t|B(0)|+\lceil-\frac{4}{3}t^2+\frac{2}{3}t\rceil$, then combining this with \eqref{azbz} and \eqref{azbz2} contradicts \eqref{tbound}. Therefore $\sum_{i=1}^t|A(0)+_iB(0)|< t|A(0)|+t|B(0)|+\lceil-\frac{4}{3}t^2+\frac{2}{3}t\rceil$
Now, by \eqref{azbz}, \eqref{azbz2}, $|B(0)|<|B|$ and \ref{claimD}, we can apply the induction hypothesis to $A(0)$ and $B(0)$.
Thus
\be\label{goblinjuice}\ell:=|A(0)\setminus A'|+|B(0)\setminus B'|\leq t-1\quad\und\quad A'+_tB'=A'+B'=A(0)+_tB(0)\ee for some $A'\subseteq A(0)$ and $B'\subseteq B(0)$.
Let $H=\mathsf H(A'+B')$ and $\rho=|(H+A')\setminus A'|+|(H+B')\setminus B'|$ be as defined in Theorem \ref{new}.
By Item 1 in Proposition \ref{mainprop}, we can assume that \be\label{oncemore}A'=(A'+H)\cap A(0)\quad\und\quad B'=(B'+H)\cap B(0).\ee
 By Item 5 in Proposition \ref{mainprop}, we have \be\label{Hbig}|H|-\rho\ge t+1.\ee

By \eqref{azbz2}, for any $i\ge 1$, we have
\begin{align}\label{mid}
    A'+_i B'\subseteq A(0)+_i B(0)\subseteq 0+(A+_i B)=A+_i B.
\end{align}

In particular, we have
\begin{align}\label{a'b'atb}
    A'+B'=A'+_t B'=A(0)+_t B(0)\subseteq A+_t B.
\end{align}

By \eqref{mid}, we have
\begin{align*}
    A'+_i B'\subseteq (A+_i B)\cap (A'+B')\quad\mbox{ for any $i\geq 1$},
\end{align*}
which, together with Item 3 in Proposition \ref{mainprop} and \eqref{azbz}, implies
\begin{align}
    \sum_{i=1}^t |(A+_i B)\cap (A'+B')|&\ge \sum_{i=1}^t |A'+_i B'| \label{imp1} \\
    &=t|A|+t|B|-t\ell-t(|H|-\rho). \label{important}
\end{align}
Depending on whether $A'=A(0)$ and whether $B'=B(0)$, there are three cases.

\smallskip

\textbf{Case 1.} There exists $a\in A(0)\setminus A'$.

In this case, we have $\ell=|A(0)\setminus A'|+|B(0)\setminus B'|\ge 1$ (by \eqref{goblinjuice}). By \eqref{oncemore} and Item 4 in Proposition \ref{mainprop}, there exists $\beta\in B'$ such that
\begin{align}\label{case1star}
    \Bigl(a+\bigl((\beta+H)\cap B(0)\bigr)\Bigr)\cap (A(0)+_t B(0))=\Bigl(a+\bigl((\beta+H)\cap B'\bigr)\Bigr)\cap (A(0)+_t B(0))=\emptyset.
\end{align}
We have $a\in A(0)=A\cup B$ and $(\beta+H)\cap B(0)\subseteq B(0)=A\cap B$. Thus, $a+\bigl((\beta+H)\cap B(0)\bigr)\subseteq A+B$.

Recall that $a\in A(0)=A\cup B$ and $B(0)=A\cap B$. Suppose that, in $a+\bigl((\beta+H)\cap B(0)\bigr)$, there are at least $t$ elements each with at most $t$ representations in $A+B$.
Then $|(a+B(0))\setminus (A+_{t+1}B)|\geq t$.
 If $a\in A$, then $|(a+B)\setminus (A+_{t+1}B)|\geq |(a+B(0))\setminus (A+_{t+1}B)|\geq t$, and if $a\in B$, then $|(B+a)\setminus (A+_{t+1}B)|\geq |(a+B(0))\setminus (A+_{t+1}B)|\geq t$, both contradicting \ref{claimB}.
  So we instead conclude that there are at most $t-1$ elements in $a+\bigl((\beta+H)\cap B(0)\bigr)$ each  having  at most $t$ representations in $A+B$. Thus, as $|\bigl((\beta+H)\cap B(0)\bigr)|\geq |H|-p\geq t+1$ by definition of $\rho$ and \eqref{Hbig}, it follows that there is a subset $S\subseteq a+\bigl((\beta+H)\cap B(0)\bigr)$ with \be\label{Scard}|S|=|H|-\rho-(t-1)\quad\und\quad  \mathsf r_{A,\,B}(s)\ge t+1\quad\mbox{ for every $s\in S$}.\ee

Now we estimate $\sum_{i=1}^t |(A+_i B)\setminus (A'+B')|$. By \eqref{case1star} and \eqref{a'b'atb}, $S\subseteq a+\bigl((\beta+H)\cap B(0)\bigr)$ is disjoint from $A(0)+_t B(0)=A'+B'$ (with the equality by \eqref{goblinjuice}), while $S\subseteq A+_t B$ by definition of $S$. Thus each element in $S$ contributes $t$ to $\sum_{i=1}^t |(A+_i B)\setminus (A'+B')|$. By Item 4  in Proposition \ref{mainprop}, we know that, for any $e\in A(0)\setminus A'$, there are at least $|H|-\rho$ elements in $e+B'$ that are not in $A(0)+_t B(0)=A'+B'$ (with the equality  from \eqref{goblinjuice}), and at most $|S|$ of them are equal to an element in $S$. Similarly, for any $e'\in B(0)\setminus B'$, there are at least $|H|-\rho$ elements in $A'+e'$ that are not in $A(0)+_t B(0)=A'+B'$, and at most $|S|$ of them are equal to an element in $S$. In total, there are $\ell=|A(0)\setminus A'|+|B(0)\setminus B'|\ge 1$ such elements $e$ and $e'$ (noted at the start of Case 1). Each one  contributes at least $|H|-\rho-|S|$ to $\sum_{i=1}^t |(A+_i B)\setminus (A'+B')|$ with the contributions cumulative as  $\ell\leq t-1$ (by \eqref{goblinjuice}). Thus,
\begin{align*}
    \sum_{i=1}^t |(A+_i B)\setminus (A'+B')|\ge t|S|+\ell(|H|-\rho-|S|).
\end{align*}

Then, by \eqref{important}, \eqref{Scard} and $\ell\le t-1$, we have
\begin{align*}
    \sum_{i=1}^t |A+_i B|&=\sum_{i=1}^t |(A+_i B)\cap (A'+B')|+\sum_{i=1}^t |(A+_i B)\setminus (A'+B')|\\
    &\ge t|A|+t|B|-t\ell-t|H|+t\rho+t|S|+\ell(|H|-\rho-|S|)\\
    &=t|A|+t|B|-t^2+t-\ell\\
    &\ge t|A|+t|B|-t^2+1,
\end{align*}
which contradicts \eqref{tboundaux}. So Case 1 is eliminated.

\smallskip

\textbf{Case 2.} $A'=A(0)$, but there exists $b\in B(0)\setminus B'$.

In this case, we also have $\ell=|A(0)\setminus A'|+|B(0)\setminus B'|\ge 1$ (by \eqref{goblinjuice}). By \eqref{oncemore} and Item 4 in Proposition \ref{mainprop}, there exists $\alpha\in A'=A(0)$ such that
\begin{align}\label{case2star}
    \Bigl(\bigl((\alpha+H)\cap A(0)\bigr)+b\Bigr)\cap (A(0)+_t B(0))=\Bigl(\bigl((\alpha+H)\cap A'\bigr)+b\Bigr)\cap (A(0)+_t B(0))=\emptyset.
\end{align}
Recall that $A(0)=A\cup B$. Let $A_\alpha=(\alpha+H)\cap A$ and $B_\alpha=(\alpha+H)\cap B$. Then
\begin{align*}
    |A_\alpha|+|B_\alpha|-|A_\alpha\cap B_\alpha|=|(\alpha+H)\cap (A\cup B)|=|(\alpha+H)\cap A(0)|\ge |H|-\rho,
\end{align*}
with the inequality by definition of $\rho$, which means
\begin{align*}
    |A_\alpha|+|B_\alpha|\ge |H|-\rho+|A_\alpha\cap B_\alpha|,
\end{align*}
and thus we can find $A_\alpha'\subseteq A_\alpha$ and $B_\alpha'\subseteq B_\alpha$ with $A_\alpha'\cap B_\alpha'=\emptyset$ and $|A_\alpha'|+|B_\alpha'|=|H|-\rho\geq t+1$ (by Item 4 in Proposition \ref{mainprop}).

If, in $(A_\alpha'+b)\sqcup (b+B_\alpha')$, there are at least $2t-1$ elements each with at most $t$ representations in $A+B$, then the Pigeonhole Principle ensures that
\begin{align*}
    |(A_\alpha'+b)\setminus (A+_{t+1} B)|\ge t \quad \text{or} \quad |(b+B_\alpha')\setminus (A+_{t+1} B)|\ge t.
\end{align*}
Since $b\in B(0)=A\cap B$, \ $A'_\alpha\subseteq A_\alpha\subseteq A$ and $B'_\alpha\subseteq B_\alpha\subseteq B$, both cases above contradict \ref{claimB}.
Therefore, we instead conclude that, in $(A_\alpha'+b)\sqcup (b+B_\alpha')$, there are at most $2t-2$ elements each with at most $t$ representations in $A+B$. We observe that
\begin{itemize}
    \item either $|H|-\rho\le 2t-2$
    \item or $|H|-\rho\ge 2t-1$ and there is a subset $S\subseteq (A_\alpha'+b)\sqcup (b+B_\alpha')$ with
    \begin{align}\label{sizeS}
        |S|=|(A_\alpha'+b)\sqcup (b+B_\alpha')|-(2t-2)=|H|-\rho-(2t-2)
    \end{align}
    and $\mathsf r_{A,\,B}(s)\ge t+1$ for every $s\in S$.
\end{itemize}

Note that, in the former case,  we trivially have
\begin{align*}
    |(A+_t B)\setminus(A'+_t B')|\ge |H|-\rho-(2t-2).
\end{align*}
In the latter case, note that
$A'_\alpha\subseteq A_\alpha=(\alpha+H)\cap A\subseteq (\alpha+H)\cap A(0)$ and $B'_\alpha\subseteq B_\alpha=(\alpha+H)\cap B\subseteq (\alpha+H)\cap A(0)$. Thus \eqref{a'b'atb} and \eqref{case2star} ensure
\begin{align}\label{S-starter}
    S\cap (A'+_t B')=S\cap (A(0)+_t B(0))\subseteq \bigl((A_\alpha'+b)\sqcup (b+B_\alpha')\bigr)\cap (A(0)+_t B(0))=\emptyset,
\end{align} and so as $S\subseteq A+_{t+1} B\subseteq A+_t B$ (by definition of $S$),  we also have
\begin{align}\label{anothercite}
    |(A+_t B)\setminus(A'+_t B')|\ge |S|=|H|-\rho-(2t-2).
\end{align}

\smallskip

\textbf{Subcase 2.1.} $\ell\le \frac{2}{3}t$.

By \ref{claimA}, we have
\begin{align*}
    \sum_{i=1}^t|A+_i B|=&\sum_{i=1}^{t-1}|A+_i B|+|A+_t B| \\
    \ge& (t-1)|A|+(t-1)|B|-\frac{4}{3}(t-1)^2+\frac{2}{3}(t-1) \\
    &+|A'+_t B'|+|(A+_t B)\setminus(A'+_t B')|,
\end{align*}
where we have $|A+_t B|=|A'+_t B'|+|(A+_t B)\setminus(A'+_t B')|$ because $A'+_t B'=A(0)+_t B(0)\subseteq A+_t B$ by \eqref{a'b'atb}.
By \eqref{a'b'atb} and Kneser's Theorem (Theorem \ref{kneserthm}), we have $|A'+_t B'|=|A'+B'|\ge |A'+H|+|B'+H|-|H|$, which combined with   \eqref{azbz} and \eqref{anothercite} yields \begin{align*}
    |A'+_t B'|+|(A+_t B)\setminus(A'+_t B')|&\ge |A'+H|+|B'+H|-|H|+|H|-\rho-(2t-2)\\
    & =|A'|+|B'|-(2t-2)\\& =|A(0)|+|B(0)|-\ell-(2t-2)\\&=|A|+|B|-\ell-(2t-2).
\end{align*}
Combining the previous two estimates along with the subcase hypothesis $\ell\le \frac{2}{3}t$, it follows that  $\sum_{i=1}^t|A+_i B|\geq t|A|+t|B|-\frac{4}{3}t^2+\frac{2}{3}t$,
which contradicts \eqref{tbound}.

\smallskip

\textbf{Subcase 2.2.} $\ell>\frac{2}{3}t$ and $|H|-\rho\le 2t-2$.

Recall that $A'+B'=A(0)+_tB(0)$ by \eqref{goblinjuice}. We have  $A'=A(0)$ by hypothesis of Case 2, so
$\ell =|A(0)\setminus A'|+|B(0)\setminus B'|=|B(0)\setminus B'|>\frac{2}{3}t$ by \eqref{goblinjuice} and hypothesis of Subcase 2.2.
By Item 4 in Proposition \ref{mainprop}, we know that, for each of the $\ell$ elements  $e'\in B(0)\setminus B'$, there are at least $|H|-\rho$  elements in $(A'+e')\setminus (A'+B')$. Hence, since $A'+e\subseteq A(0)+B(0)\subseteq A+B$  (by \eqref{mid}) and $\ell\leq t-1$ (by \eqref{goblinjuice}), it follows that $\sum_{i=1}^t |(A+_i B)\setminus (A'+B')|\geq \ell(|H|-\rho).$ Combining this inequality along with
 \eqref{important} yields
\begin{align*}
    \sum_{i=1}^t |A+_i B|&=\sum_{i=1}^t |(A+_i B)\cap (A'+B')|+\sum_{i=1}^t |(A+_i B)\setminus (A'+B')|\\
    &\ge t|A|+t|B|-t\ell+(\ell-t)(|H|-\rho)\\
    &\ge t|A|+t|B|-t\ell+(\ell-t)(2t-2)\\
    &=t|A|+t|B|-2t^2+2t+\ell(t-2) \\
    &\geq t|A|+t|B|-\frac{4}{3}t^2+\frac{2}{3}t,
\end{align*}
where the second inequality follows because $\ell-t<0$ (by \eqref{goblinjuice}) and $|H|-\rho\le 2t-2$ (by hypothesis of Subcase 2.2), and the third inequality  because $\ell>\frac{2}{3}t$ (by hypothesis of Subcase 2.2) and $t-2\geq 0$. But now we have a contradiction to \eqref{tbound}.

\smallskip

\textbf{Subcase 2.3.} $\ell>\frac{2}{3}t$ and $|H|-\rho\ge 2t-1$.

In this subcase, as  explained in \eqref{sizeS}, there is a subset $S\subseteq (A_\alpha'+b)\sqcup (b+B_\alpha')$ with $$|S|=|H|-\rho-(2t-2)\;\und\; \mathsf r_{A,\,B}(s)\ge t+1$$ for every $s\in S$. By \eqref{goblinjuice} and \eqref{S-starter}, we have $S\cap (A'+B')=S\cap (A(0)+_t B(0))=\emptyset$. So each element in $S$ contributes $t$ to the sum $\sum_{i=1}^t |(A+_i B)\setminus (A'+B')|$.
We have  $A'=A(0)$ by hypothesis of Case 2, so we have
$\ell =|A(0)\setminus A'|+|B(0)\setminus B'|=|B(0)\setminus B'|>\frac{2}{3}t$ by \eqref{goblinjuice} and hypothesis of Subcase 2.3.
By Item 4 in Proposition \ref{mainprop}, we know that, for each of the $\ell$ elements  $e'\in B(0)\setminus B'$, there are at least $|H|-\rho$  elements in $(A'+e')\setminus (A'+B')$, and at most $|S|$ of them are equal to an element in $S$. Hence, since $A'+e\subseteq A(0)+B(0)\subseteq A+B$  (by \eqref{mid}) and $\ell\leq t-1$ (by \eqref{goblinjuice}), it follows that $\sum_{i=1}^t |(A+_i B)\setminus (A'+B')|\geq t|S|+\ell(|H|-\rho-|S|)=t(|H|-\rho)-(t-\ell)(2t-2).$ Combining this inequality along with
 \eqref{important} yields
\begin{align*}
    \sum_{i=1}^t |A+_i B|&=\sum_{i=1}^t |(A+_i B)\cap (A'+B')|+\sum_{i=1}^t |(A+_i B)\setminus (A'+B')|\\
    &\ge t|A|+t|B|-2t^2+2t+\ell(t-2) \\
    &\geq t|A|+t|B|-\frac{4}{3}t^2+\frac{2}{3}t,
\end{align*}
where the second inequality follows because $\ell>\frac{2}{3}t$ (by hypothesis of Subcase 2.3) and $t-2\geq 0$. But now we have a contradiction to \eqref{tbound}, completing Case 2.

\smallskip

\textbf{Case 3.} $A'=A(0)$ and $B'=B(0)$.

The analysis of Case 3 is the last part of this proof, and we omit it here because it is a verbatim copy of Case 2 in the proof of Theorem \ref{old} presented in \cite[p.~168]{Gry2}, which spans 5--6 pages. Note that, in our notation, $A(0)$ and $B(0)$ play the same roles as $A(z)$ and $B(z)$ in \cite{Gry2}; and \eqref{tboundaux}, \eqref{notequal}, \eqref{Hbig}, \eqref{a'b'atb}, \eqref{imp1}, and \eqref{important} in this proof are the same as (12.16), (12.23), (12.41), (12.42), (12.43), and (12.44) in \cite{Gry2}, respectively, with the argument of Case 2 in \cite{Gry2} otherwise self-contained. The inductive parameter $-(|A|+|B|)$, not used in prior arguments, is used for this case.
With this,  the proof of Theorem \ref{new} is completed.
\end{proof}


\subsection{Further Possibilities}\label{sec-conj}

As noted in the introduction, the goal is to achieve a common generalization of Pollard and Kneser's Theorems with Theorems \ref{new} progress in this direction. The naive hope would be that Theorem \ref{new} remains true replacing the hypothesis \eqref{hypothesis} with the target hypothesis $\sum_{i=1}^t|A+_iB|<t|A|+t|B|-t^2$. However, an example given in \cite{Gry1} shows that \eqref{hypothesis} can be replaced, at best, by the bound
    \begin{align}\label{98-bound}
        \sum_{i=1}^t |A+_i B|< t|A|+t|B|-\frac98 t^2,
    \end{align}
meaning the conclusions of Theorem \ref{new} are too strong when $\sum_{i=1}^t |A+_i B|\geq  t|A|+t|B|-\frac98 t^2$.
Incidently, this means that, when $t=3$, the best possible hypothesis for $\sum_{i=1}^3|A+_iB|$ that could guarantee all conclusions in Theorem \ref{new} are true is $\sum_{i=1}^3|A+_iB|<3|A|+3|B|-10$, and
when $t=4$, the best possible hypothesis for $\sum_{i=1}^4|A+_iB|$ that could guarantee all conclusions in Theorem \ref{new} are true is $\sum_{i=1}^4|A+_iB|<4|A|+4|B|-18$, both which match the hypothesis \eqref{hypothesis} in our Theorem \ref{new}, meaning \eqref{hypothesis} is optimal for $t\in [2,4]$.

Determining what a possible Kneser-Pollard Theorem might look like is surprisingly challenging, and is likely a reason why progress in this direction is slow. 
Formulating even a potential precise statement is quite some work. In order to help spur progress, we give one of the strongest possibilities below as a starting point, which we state  as a conjecture for lack of a better word, though it may be necessary to leave open the possibly of altering portions should further investigation prove this necessary.

    \begin{conjecture}\label{conjecture}
        Let $t$ be a positive integer, let $G$ be an abelian group, and let $A,\, B\subseteq G$ be finite subsets with $|A|,\, |B|\ge t$. If
    \begin{align*}
        \sum_{i=1}^t |A+_i B|< t|A|+t|B|-t^2,
    \end{align*}
    then there exist subsets $A'\subseteq A$ and $B'\subseteq B$ such that
    \begin{align}\nn
    &|A\setminus A'|+|B\setminus B'|<u,\quad  A'+B'=A'+_uB'=A+_uB, \\
    \nn
    &H:=\mathsf H(A'+B')=\mathsf H(A'+_uB')=\mathsf H(A+_t B),\quad\und\\
    &\label{conjbound}\sum_{i=1}^t|A+_iB|\geq t|A|+t|B|-t^2-u(|H|-u)
    \end{align}
    where $t=s|H|+u$ with $u\in [1,|H|]$ and $s\geq 0$ integers.
    \end{conjecture}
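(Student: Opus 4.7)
The plan is to attempt a proof via a multi-parameter lexicographic induction analogous to that used in the proof of Theorem~\ref{new}, inducting on $(t,\sum_{i=1}^t|A+_iB|,-(|A|+|B|),\min\{|A|,|B|\})$. The base case $t=1$ is immediate from Kneser's Theorem: the constraint $t=s|H|+u$ with $u\in[1,|H|]$ and $s\geq 0$ forces $s=0$ and $u=1$, so setting $A'=A$, $B'=B$ gives $|A+B|\geq|A|+|B|-|H|$, which is exactly $|A|+|B|-t^2-u(|H|-u)$, matching \eqref{conjbound}.

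For the inductive step, the first move is to identify $H=\mathsf{H}(A+_tB)$ and split on whether $|H|\geq t+1$ or $|H|\leq t$. In the range $|H|\geq t+1$ we have $s=0$ and $u=t$, so Conjecture~\ref{conjecture} reduces to proving the full structural statement of Theorem~\ref{new} (namely $A'+B'=A+_tB$ with $|A\setminus A'|+|B\setminus B'|\leq t-1$) under the weaker target hypothesis $\sum_{i=1}^t|A+_iB|<t|A|+t|B|-t^2$ rather than $-\frac{4}{3}t^2+\frac{2}{3}t$. I would attempt to refine the Dyson Transform argument of Theorem~\ref{new}, using Proposition~\ref{mainprop} with $\alpha=0$, and rework the three bottleneck points (Claim~C, Claim~D, and Case~2 of the main proof) so that they operate under this target hypothesis. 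When $|H|\leq t$, I would pass to the quotient $G/H$ via $\phi_H$: since $A+_tB$ is stabilized exactly by $H$, its image $\phi_H(A+_tB)$ is aperiodic in $G/H$, so a Kneser-type argument there at scale $u=t-s|H|$ should yield the weaker conclusion involving $u$ rather than $t$.

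For the Dyson step itself, I would follow the established framework: choose $z\in A-B$ maximizing $|B(z)|$ subject to $B(z)\subsetneq B$, use \eqref{azbz2} to transfer popular-sum data, and apply the inductive hypothesis to $(A(0),B(0))$ when $|B(0)|$ is large enough. Analogs of Claims~A, B, and D from the proof of Theorem~\ref{new} would need to be re-established under the relaxed bound. The trickiest inductive case (Case~3 in the proof of Theorem~\ref{new}) would need to be adapted to produce the weaker conclusion involving $u$ when $|H|\leq t$, by decomposing $(A+_uB)\setminus(A+_tB)$ into its $H$-coset pieces and estimating each multiplicity layer separately.

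The main obstacle is precisely the gap between the $-\frac{4}{3}t^2+\frac{2}{3}t$ hypothesis of Theorem~\ref{new} and the target $-t^2$. The example cited at \eqref{98-bound} shows that the strong conclusions of Theorem~\ref{new} actually fail whenever $\sum_{i=1}^t|A+_iB|\geq t|A|+t|B|-\frac{9}{8}t^2$, so in the intermediate regime where the deficit $t|A|+t|B|-\sum_{i=1}^t|A+_iB|$ lies in $(t^2,\frac{9}{8}t^2]$, Conjecture~\ref{conjecture} must genuinely yield the weaker conclusion (with $u<t$). Detecting at the level of the Dyson Transform when $u<t$ is the right parameter --- that is, when the $H$-periodic structure forces us to control $A+_uB$ rather than $A+_tB$ --- is not something the current framework provides naturally, and the existing arguments for Claims~C and D appear to break down precisely in this regime. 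Closing this gap likely requires either a substantially refined Dyson argument that explicitly tracks the $H$-coset layering of $A+_iB$ as $i$ varies, or a genuinely new technique --- perhaps a layered application of the Kemperman Structure Theorem --- that isolates the $H$-periodic skeleton of $A+_uB$ from its aperiodic remainder.
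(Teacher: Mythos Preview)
The statement you are attempting to prove is labeled \emph{Conjecture}~\ref{conjecture} in the paper, not a theorem. The paper provides no proof; on the contrary, the authors state explicitly in Section~\ref{sec-conj} that ``Whether Conjecture~\ref{conjecture} is valid (as well as \eqref{strongconjbound}), or even a somewhat weaker version, is wide open.'' There is therefore nothing in the paper to compare your attempt against.

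Your proposal is not a proof but an outline that candidly identifies its own obstructions without resolving them. You correctly recognize that the case $u=t$ (equivalently $|H|>t$) would require the conclusions of Theorem~\ref{new} under the relaxed hypothesis $\sum_{i=1}^t|A+_iB|<t|A|+t|B|-t^2$, and you note that the example behind \eqref{98-bound} shows those conclusions can fail in the range $[-\tfrac{9}{8}t^2,-t^2)$. That example forces $s\geq 1$, so consistency of the conjecture hinges on the fact that in this intermediate regime one must have $|H|\leq t$ and hence $u<t$; however, \emph{proving} that $|\mathsf H(A+_tB)|\leq t$ whenever the deficit lies in this range is itself part of what the conjecture asserts, and your sketch offers no mechanism for establishing it a priori. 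Your proposed quotient argument for the case $|H|\leq t$ is also underspecified: the image $\phi_H(A+_tB)$ being aperiodic in $G/H$ does not by itself control $A+_uB$, since the popular sumsets $A+_iB$ for $i<t$ need not be $H$-periodic, and there is no clean relationship between $\phi_H(A)+_u\phi_H(B)$ and $\phi_H(A+_uB)$ when $A$ and $B$ are not themselves unions of $H$-cosets.

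In short, your final paragraph is an accurate self-assessment: closing the gap between $-\tfrac{4}{3}t^2$ and $-t^2$ is exactly the open problem, and the Dyson-transform framework of Theorem~\ref{new} does not, as currently constituted, detect when the weaker $u$-level structure is the correct output. The paper's discussion of recursive constructions at the end of Section~\ref{sec-conj} suggests that any eventual proof will need to accommodate iterated quotient structure in a way that goes beyond a single application of Proposition~\ref{mainprop}.
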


In the case $t=1$, the lower bound \eqref{conjbound} becomes $|A+B|\geq |A|+|B|-|H|$ with $H=\mathsf H(A'+B')=\mathsf H(A+B)$, which is an equivalent formulation of Kneser's Theorem. The bound \eqref{conjbound}  is minimized when $u=\frac12|H|$, in which case
\be\nn
\sum_{i=1}^t|A+_iB|\geq t|A|+t|B|-t^2-\frac14|H|^2,
\ee
matching the similar bound from Theorem \ref{ham-serr-thm}. When $|H|\geq t$, we have $u=t$, and the conclusions of Theorem \ref{new} are recovered. When this is not the case, that is, when  $|H|<t$, then  $t=s|H|+u$ with $s\geq 1$, in which case \eqref{conjbound}  implies
\begin{align*}
\sum_{i=1}^t|A+_iB|&\geq t|A|+t|B|-t^2+\frac{(s+1)u^2-tu}{s}\\
&\geq t|A|+t|B|-t^2-\frac{t^2}{4s(s+1)}\geq t|A|+t|B|-\frac98t^2,\end{align*} which would force  Theorem \ref{new}'s conclusion  to hold when $\sum_{i=1}^t|A+_iB|< t|A|+t|B|-\frac98t^2$, matching what is forced by the example from \cite{Gry1} mentioned above. There could also be a strengthening of \eqref{conjbound} involving  $\rho=|(A'+H)\setminus A'|+|(B'+H)\setminus B'|< |H|-u$ and $\ell:=|A\setminus A'|+|B\setminus B'|<u$, such as
\be\label{strongconjbound}
\sum_{i=1}^t|A+_iB|\geq t|A|+t|B|-t^2-(u-\ell)(|H|-\rho-u).
\ee

We suspect that more can be added to Conjecture \ref{conjecture} regarding the structure of $A'$ and $B'$, though we warn the reader that any such structure may be surprisingly complicated and potentially similar to the Kemperman Structure Theorem, where a small number of basic building block examples can be iteratively combined through recursive processes to yield the general form for $A'$ and $B'$, with these complications only occurring when either  $s\geq 1$ or maybe $s\geq 2$, thus lying above the $-\frac98t^2$  threshold where we currently have no results and explaining why they do not occur in Theorem \ref{new}.

For instance, if $A=-B$ with $H= \mathsf H(A-A)=\mathsf H(A)$ and $|A|=(s+1)|H|=t+|H|-u$, where $t=s|H|+u$ with $s\geq 1$ and $u\in [1,|H|-1]$, then $\sum_{i=1}^t|A+_iB|=|A|^2-(|H|-u)|H|=t|A|+t|B|-u(|H|-u)$ yet $A$ can be nearly arbitrary. This example is  derived as a variant from the characterization of equality in Pollard's Theorem given in \cite{pollard-equality} and shows  that strong structure for very large $t$ is not possible. For another example, we can take any $A$ and $B$ with $|A+B|<|A|+|B|-1$ and $|H|>t$, where $H=\mathsf H(A+B)$, and find $\sum_{i=1}^t|A+_iB|=t|A|+t|B|-t|H|<t|A|+t|B|-t^2$. For a third example, take $A$ and $B$ to be arithmetic progressions of $H$-cosets (of common difference) with $t=s|H|+u$, where $s\geq 0$ and $u\in [1,|H|-1]$. Then 
\begin{align*}
\sum_{i=1}^t|A+_iB|&=t|A|+t|B|-(2s+1)|H|+2(|H|+2|H|+\ldots+s|H|)|H|\\
&=t|A|+t|B|-(2s+1)t|H|+s(s+1)|H|^2\\
&=t|A|+t|B|-t^2-u(|H|-u). 
\end{align*}
In all three cases, it is also possible to remove a small number of elements from $A$ and $B$ and stay  below the threshold $t|A|+t|B|-t^2$. These should then all be examples of basic building blocks, though it is possible others may exist.

As for examples of recursive processes, start with the first example above, say $A=-B$ and $K=\mathsf H(A-A)=\mathsf H(A)$ with $t=s|K|+u$, \ $u\in [1,|K|-1]$ and $|A|=|B|=(s+1)|K|$. By translation, we can assume $K\subseteq A\cap B$. Now define $A^*=(A\setminus K)\cup A_0$ and $B^*=(B\setminus K)\cup B_0$, where $A_0,B_0\subseteq K$ are any pair of subsets with $|A_0|,\,|B_0|\geq u$ and $\sum_{i=1}^{u}|A+_iB|<u|A_0|+u|B_0|-u^2$. Then  $H:=\mathsf H(A+_tB)=\mathsf H(A_0+_uB_0)\leq K$ and \begin{align*}
\sum_{i=1}^t|A^*+_iB^*|&=|A^*||B^*|-|A_0||B_0|+\sum_{i=1}^u|A_0+_iB_0|\\
&=t|A^*|+t|B^*|-t^2-u|A_0|-u|B_0|+u^2+\sum_{i=1}^u|A_0+_iB_0|\\
&<t|A^*|+t|B^*|-t^2.
\end{align*}
Alternatively, we could also start with any pair of subsets $A$ and $B$ with $K=\mathsf H(A)=\mathsf H(B)=\mathsf H(A+B)$ and $|A+B|<|A|+|B|-1$ such that $|K|>t$ and $A+B$ has a unique expression element modulo $K$. By translation, we can assume this unique expression (modulo $K$) element is $0$ with $K\subseteq A\cap B$. Define $A^*=(A\setminus K)\cup A_0$ and $B^*=(B\setminus K)\cup B_0$, where $A_0,B_0\subseteq K$ are any pair of subsets with $|A_0|,\,|B_0|\geq t$ and $\sum_{i=1}^{t}|A+_iB|<t|A_0|+t|B_0|-t^2$.
Then  $H:=\mathsf H(A+_tB)=\mathsf H(A_0+_tB_0)\leq K$ and \begin{align*}\sum_{i=1}^t|A^*+_iB^*|&=t(|A^*|-|A_0|)+t(|B^*|-|B_0|)
+\sum_{i=1}^t|A_0+_iB_0|<t|A^*|+t|B^*|-t^2
\end{align*}
Alternating between both recursive processes defined above and ending with a basic building block pair  $A_0$ and $B_0$  gives examples of fairly intricate possibilities for the structure of $A'$ and $B'$. 
 Whether Conjecture \ref{conjecture} is valid (as well as  \eqref{strongconjbound}), or even a somewhat weaker version, is wide open.

\section{An Application to Restricted Sumsets}\label{sec-app}

In this section, we give a simple application of Theorem \ref{new}.
For an abelian group $G$, two sets $A,\, B\subseteq G$, and a map $\tau: A\longrightarrow G$, the restricted sumset $A\overset{\tau}{+} B$ is defined by
\begin{align*}
    A\overset{\tau}{+} B:=\{a+b: a\in A,\ b\in B,\ b\neq \tau(a)\}.
\end{align*}
Lev \cite{Lev} proved the following lower bound on the size of $A\overset{\tau}{+} B$.

\begin{theorem}[Lev \cite{Lev}]
    Let $G$ be a finite abelian group, let $A,\, B\subseteq G$ be subsets with $|A|+|B|\ge |G|+1$, and let $\tau: A\lhook\joinrel\longrightarrow G$ be an injective map. Then
    \begin{align}\label{levbound}
        |A\overset{\tau}{+} B|>|G|-\sqrt{|G|}-\frac{1}{2}.
    \end{align}
\end{theorem}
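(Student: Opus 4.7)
The plan is to upper bound $k := |G \setminus (A \overset{\tau}{+} B)|$. Set $n := |G|$ and $s := |A|+|B|-n \geq 1$. Proposition \ref{pigeon} gives $\mathsf r_{A,B}(g) \geq s \geq 1$ for every $g \in G$, so $A + B = G$. Write $T := G \setminus (A \overset{\tau}{+} B)$, so $|T| = k$, and the goal becomes $k < \sqrt{n} + \tfrac{1}{2}$. The key elementary observation is that for any $g \in T$, every pair $(a, b) \in A \times B$ with $a + b = g$ must satisfy $b = \tau(a)$; otherwise $g$ would lie in $A \overset{\tau}{+} B$. Setting $D := \{a \in A : \tau(a) \in B\}$ and defining $f \colon D \to G$ by $f(a) := a + \tau(a)$, this forces $\mathsf r_{A,B}(g) = |f^{-1}(g)|$ for every $g \in T$, while $\sum_{g \in G} |f^{-1}(g)| = |D| \leq \min\{|A|, |B|\} \leq (n+s)/2$ (using injectivity of $\tau$).

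Combining the identity $\mathsf r_{A,B}(g) = |f^{-1}(g)|$ on $T$ with the Pigeonhole bound $|f^{-1}(g)| \geq s$ already gives $k \leq |D|/s \leq (n+s)/(2s)$, which settles the range $s > \tfrac{1}{2}\sqrt{n}$. For the remaining range $1 \leq s \leq \tfrac{1}{2}\sqrt{n}$, I would fix a parameter $t$ of order $\sqrt{n}$ and split $T = T_{\geq t} \sqcup T_{<t}$ with $T_{\geq t} := T \cap (A +_t B)$. Every $g \in T_{\geq t}$ satisfies $|f^{-1}(g)| = \mathsf r_{A,B}(g) \geq t$, so $|T_{\geq t}| \leq |D|/t \leq n/t$; while $T_{<t} \subseteq G \setminus (A +_t B)$ gives $|T_{<t}| \leq n - |A +_t B|$. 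The task then reduces to an upper bound on $n - |A +_t B|$ for $t$ near $\sqrt{n}$.

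This is where Theorem \ref{new} enters. The identity $\sum_{i=1}^t |A +_i B| = tn - \sum_{g \notin A +_t B}(t - \mathsf r_{A,B}(g))$, combined with $t - \mathsf r_{A,B}(g) \geq 1$ for $g \notin A +_t B$, converts any lower bound on $\sum_{i=1}^t |A +_i B|$ into an upper bound on $n - |A +_t B|$. Either the hypothesis \eqref{hypothesis} of Theorem \ref{new} fails for $A$ and $B$, forcing $\sum_{i=1}^t |A +_i B| \geq t(n+s) - \lceil \tfrac{4}{3}t^2 - \tfrac{2}{3}t \rceil$, or its structural conclusion \eqref{conclusion-impliedbound} supplies $\sum_{i=1}^t |A +_i B| \geq t(n+s) - t|H|$ for $H := \mathsf H(A +_t B)$. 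The main obstacle I anticipate is the structural alternative, since $|H|$ could a priori be much larger than $\sqrt{n}$. To circumvent it, I would leverage the auxiliary sets $P := f^{-1}(T) \subseteq A$ and $Q := \tau(P) \subseteq B$, which by injectivity of $\tau$ satisfy $|P| = |Q|$ and by Pigeonhole satisfy $|P| = \sum_{g \in T}|f^{-1}(g)| \geq sk$. A direct verification shows $(A \setminus P) + (B \setminus Q) \subseteq A \overset{\tau}{+} B$: if $a+b = g$ with $a \in A \setminus P$, $b \in B \setminus Q$, and $b = \tau(a)$, then $a \in D \setminus P$, so $g = f(a) \notin T$ by the definition of $P$. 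Applying Kneser's Theorem (Theorem \ref{kneserthm}) to this containment, and -- when the reduced hypotheses permit -- Theorem \ref{new} itself at the appropriate popularity level, bounds $|H|$ in terms of $|P|$ and $k$; after optimizing the trade-off at $t \approx \sqrt{n}$, the two contributions $n/t$ and $n - |A +_t B|$ should combine to deliver the sharp bound $k < \sqrt{n} + \tfrac{1}{2}$.
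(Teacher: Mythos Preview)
The paper does not prove Lev's theorem; it is quoted from \cite{Lev} as background for the paper's own (different) bound established immediately afterward. So there is no in-paper proof to compare against, and I evaluate your proposal on its own.

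Your treatment of the range $s>\tfrac12\sqrt n$ is correct, but the small-$s$ range contains a genuine quantitative gap. The conversion you propose---from the identity $\sum_{i=1}^t|A+_iB|=tn-\sum_{g\notin A+_tB}\bigl(t-\mathsf r_{A,B}(g)\bigr)$ together with $t-\mathsf r_{A,B}(g)\ge 1$---only yields $n-|A+_tB|\le tn-\sum_{i=1}^t|A+_iB|$. In the alternative where the hypothesis of Theorem~\ref{new} fails, this gives $n-|A+_tB|\le \tfrac43 t^2-\tfrac23 t-ts$, which for the value $t\approx\sqrt n$ you specify is of order $n$, not $\sqrt n$. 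Feeding this into $k\le |D|/t+\bigl(n-|A+_tB|\bigr)$ and optimizing over $t$ produces only $k=O\bigl(|D|^{2/3}\bigr)=O\bigl(n^{2/3}\bigr)$, nowhere near $k<\sqrt n+\tfrac12$. The structural alternative is sketched even more loosely, but the argument already fails quantitatively before you reach it.

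The fix is to avoid the detour through $|A+_tB|$ entirely. From $\mathsf r_{A,B}(g)=|f^{-1}(g)|$ on $T$ one obtains directly
\[
\sum_{i=1}^t|A+_iB|=\sum_{g\in G}\min\{t,\mathsf r_{A,B}(g)\}\le t\,|A\overset{\tau}{+}B|+\sum_{g\in T}|f^{-1}(g)|\le t\,|A\overset{\tau}{+}B|+|\tau|,
\]
precisely the inequality $t|A\overset{\tau}{+}B|+|\tau|\ge \sum_{i=1}^t|A+_iB|$ the paper uses in the proof of its own theorem just below. Any Pollard-type estimate $\sum_{i=1}^t|A+_iB|\ge t|A|+t|B|-Ct^2$ then gives $|A\overset{\tau}{+}B|\ge |A|+|B|-Ct-|\tau|/t$, and optimizing $t$ makes the deficit of order $\sqrt{|\tau|}\le \sqrt n$. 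That is the mechanism behind the $\sqrt n$ bound; passing through $n-|A+_tB|$ squares the loss and destroys it.
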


For an injective map $\tau: A\lhook\joinrel\longrightarrow G$, denote $|\{(a,\ b): a\in A,\ b\in B,\ \tau(a)=b\}|$ by $|\tau|$. We have $|\tau|\le \min\{|A|,\ |B|\}$. Using Theorem \ref{new}, we prove a new lower bound on $|A\overset{\tau}{+} B|$, which is better than \eqref{levbound} when one of $|A|$ and $|B|$ is small, namely, when $\min\{|A|,\ |B|\}<\frac{3}{16}|G|+\frac{5}{16}\sqrt{|G|}+\frac{25}{192}$.

\begin{theorem}
    Let $G$ be a finite abelian group, let $A,\, B\subseteq G$ be  subsets with $|A|+|B|\ge |G|+1$, and let $\tau: A\lhook\joinrel\longrightarrow G$ be an injective map. Then
    \begin{align*}
        |A\overset{\tau}{+} B|\ge |G|+\frac{1-4\sqrt{3\min\{|A|,\ |B|\}}}{3}.
    \end{align*}
\end{theorem}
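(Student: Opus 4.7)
The plan is to compare two bounds on the partial sum $\sum_{i=1}^t|A+_iB|$ for a suitable positive integer $t$: an upper bound coming from the fact that $V:=G\setminus(A\overset{\tau}{+}B)$ consists of elements all of whose representations are blocked by $\tau$, and the lower bound from Theorem~\ref{new}. Without loss of generality take $|B|\le|A|$, set $m:=|B|$ and $d:=|A|+|B|-|G|\ge 1$. The Pigeonhole Bound (Proposition~\ref{pigeon}) gives $\mathsf r_{A,B}(g)\ge d\ge 1$ for every $g\in G$, so $A+B=G$; moreover, for each $g\in V$, every pair $(a,b)\in A\times B$ with $a+b=g$ must satisfy $b=\tau(a)$, so $\mathsf r_{A,B}(g)=f(g)$ where $f(g):=|\{a\in A:\tau(a)\in B,\ a+\tau(a)=g\}|$, and by injectivity of $\tau$ we have $\sum_{g\in G}f(g)\le|\tau|\le m$.

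For any integer $t$ with $2\le t\le m$, splitting $\sum_{i=1}^t|A+_iB|=\sum_{g\in G}\min(\mathsf r_{A,B}(g),t)$ according to whether $g\in V$ yields the upper bound
$$\sum_{i=1}^t|A+_iB|\le\sum_{g\in V}f(g)+t|A\overset{\tau}{+}B|\le m+t(|G|-|V|),$$
since for $g\in V$ the contribution is at most $f(g)$ while for $g\notin V$ it is at most $t$. Theorem~\ref{new} then provides a lower bound on the same sum: either the hypothesis \eqref{hypothesis} fails and $\sum_{i=1}^t|A+_iB|\ge t|A|+t|B|-\tfrac{4}{3}t^2+\tfrac{2}{3}t$, or the structural conclusion holds and \eqref{conclusion-impliedbound} gives $\sum_{i=1}^t|A+_iB|\ge t|A|+t|B|-t|H|$ with $H=\mathsf H(A+_tB)$ and $|H|-\rho\ge t+1$ (Proposition~\ref{mainprop} Item~5).

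In the generic case (and in the structural case whenever $|H|\le\tfrac{4}{3}t-\tfrac{2}{3}$, which makes the structural lower bound at least as strong), combining the two bounds and rearranging yields
$$|V|\le\frac{m}{t}+\frac{4}{3}t-d-\frac{2}{3}\le\frac{m}{t}+\frac{4}{3}t-\frac{5}{3},$$
using $d\ge 1$. By AM--GM, the continuous minimum of $\tfrac{m}{t}+\tfrac{4}{3}t$ occurs at $t=\tfrac{\sqrt{3m}}{2}$ with value $\tfrac{4\sqrt{3m}}{3}$, so choosing $t$ to be the nearest integer in $[2,m]$ and estimating the rounding error gives $|V|\le\tfrac{4\sqrt{3m}-1}{3}$, which is equivalent to the claimed inequality $|A\overset{\tau}{+}B|\ge|G|+\tfrac{1-4\sqrt{3m}}{3}$. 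The boundary values $m\le 5$ and the degenerate situation $t^*<2$ are handled by the crude estimate $|V|\le m$, which already undercuts the target for such $m$.

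The main obstacle is the remaining sub-case in which Theorem~\ref{new}'s hypothesis holds and $|H|>\tfrac{4}{3}t-\tfrac{2}{3}$: here the lower bound $t|A|+t|B|-t|H|$ is too weak for the above combination to deliver the target directly. In this regime $A+_tB$ is highly structured---$H$-periodic with a large stabilizer---so one would need to exploit Proposition~\ref{mainprop} Item~4 (which forces $|H|-\rho\le R(t):=|G|-|A+_tB|$) together with Item~5 more carefully, perhaps by passing to the quotient $G/H$ and iterating, or by showing that this sub-case is incompatible with $|V|$ exceeding the target size. This sub-case is where I expect the bulk of the technical work to lie, though the generic analysis above already fixes the correct quantitative shape of the bound.
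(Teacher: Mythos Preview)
Your generic case is correct and coincides with Case~I of the paper's argument: the upper bound $\sum_{i=1}^t|A+_iB|\le |\tau|+t|A\overset{\tau}{+}B|$ combined with the failure of \eqref{hypothesis} gives exactly $|V|\le \tfrac{m}{t}+\tfrac{4}{3}t-\tfrac{5}{3}$, and the optimization in $t$ is the same.

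The genuine gap is the structural sub-case, and your diagnosis of it is off. You try to salvage the \emph{numerical} conclusion $\sum_{i=1}^t|A+_iB|\ge t|A|+t|B|-t|H|$, notice it is too weak when $|H|$ is large, and then propose quotienting by $H$ or invoking Item~4 of Proposition~\ref{mainprop}. None of that is needed. The point of Theorem~\ref{new} is that its \emph{structural} output, namely $A'+B'=A+_tB$ together with $\ell:=|A\setminus A'|+|B\setminus B'|\le t-1$, already bounds $|V|$ directly and does so \emph{independently of $|H|$}. The argument is a simple charging: every $g\in V$ has all of its representations blocked by $\tau$. If $g$ has a blocked pair $(a,b)$ with $a\notin A'$ or $b\notin B'$, charge $g$ to that pair; since $\tau$ is injective there are at most $\ell\le t-1$ such pairs in total. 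Otherwise every blocked pair for $g$ lies in $A'\times B'$, whence $g\in A'+B'=A+_tB$ and so $\mathsf r_{A,B}(g)\ge t$; charge $g$ at weight $\tfrac{1}{t}$ to each of its $\ge t$ blocked pairs. Summing, $|V|\le \ell+\tfrac{1}{t}(|\tau|-\ell)\le (t-1)+\tfrac{m-(t-1)}{t}=t+\tfrac{m+1}{t}-2$, which for $t\ge 2$ is strictly stronger than the target $\tfrac{4}{3}t+\tfrac{m}{t}-\tfrac{5}{3}$ (the difference is $\tfrac{t^2+t-3}{3t}>0$). So the ``hard'' case is in fact the easier one, and no analysis of $H$, $\rho$, or quotients is required.

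A minor secondary issue: your boundary claim that $|V|\le m$ suffices for $m\le 5$ fails at $m=5$ (one needs $|V|\le 4$ there), but once the structural case is handled as above the bound $|V|\le \tfrac{4}{3}t+\tfrac{m}{t}-\tfrac{5}{3}$ holds for \emph{every} integer $t\in[2,m]$, so the paper's choice $t=\lceil\sqrt{3m}/2\rceil$ works uniformly for all $m\ge 2$.
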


\begin{proof}
    Let $M:=\min\{|A|,\ |B|\}$.

    If $M=1$, then as $|A|+|B|\ge |G|+1$, we must have $\max\{|A|,\ |B|\}=|G|$ and
    \begin{align*}
        |A\overset{\tau}{+} B|=|G|-1> |G|+\frac{1-4\sqrt{3}}{3}.
    \end{align*}

    Now assume $M\ge 2$. We will show that, for any integer $t\in [2,\, M]$, we always have
    \begin{align}\label{bothcases}
        |A\overset{\tau}{+} B|\ge |G|-\frac{4}{3}t-\frac{M}{t}+\frac{5}{3}.
    \end{align}

\smallskip

    \textbf{Case I.} $\sum_{i=1}^t|A+_iB|\geq t|A|+t|B|-\frac{4}{3}t^2+\frac{2}{3}t$.

    Trivially, we have
    \begin{align*}
        t|A\overset{\tau}{+} B|+|\tau|\ge \sum_{i=1}^t |A+_i B|,
    \end{align*}
    which together with the hypothesis of Case I yields
    \begin{align*}
        |A\overset{\tau}{+} B|&\ge |A|+|B|-\frac{4}{3}t+\frac{2}{3}-\frac{|\tau|}{t} \\
        &\ge |G|-\frac{4}{3}t-\frac{|\tau|}{t}+\frac{5}{3} \\
        &\ge |G|-\frac{4}{3}t-\frac{M}{t}+\frac{5}{3}.
    \end{align*}

\smallskip

    \textbf{Case II.} $\sum_{i=1}^t|A+_iB|< t|A|+t|B|-\frac{4}{3}t^2+\frac{2}{3}t$.

In this case, we can apply Theorem \ref{new} to $A$ and $B$ resulting in subsets $A'\subseteq A$ and $B'\subseteq B$ such that $\ell:=|A\setminus A'|+|B\setminus B'|\leq t-1$ and $A'+B'=A'+_tB'=A+_tB$.  In particular,  for any $a\in A'$ and $b\in B'$ with $\tau(a)=b$, we have  $\mathsf r_{A,\,B}(a+b)\ge t$. As $|A|+|B|\ge |G|+1$, the Pigeonhole Bound (Proposition \ref{pigeon}) implies $A+B=G$. Thus
    \begin{align*}
        |A\overset{\tau}{+} B|\ge& |A+B|-|\{(a,\ b):\; \tau(a)=b\mbox{ and either $a\in A\setminus A'$  or $b\in B\setminus B'$}\}| \\
        &-\frac{1}{t}|\{(a',\ b'): \; \tau(a')=b',\;a'\in A'\und b'\in B'\}| \\
        =& |G|-|\{(a,\ b):\; \tau(a)=b\mbox{ and either $a\in A\setminus A'$  or $b\in B\setminus B'$}\}| \\
        &-\frac{1}{t}(|\tau|-|\{(a,\ b):\; \tau(a)=b\mbox{ and either $a\in A\setminus A'$  or $b\in B\setminus B'$}\}|) \\
        \ge& |G|-\ell-\frac{1}{t}(M-\ell)
        \ge |G|-(t-1)-\frac{1}{t}(M-(t-1)) \\
        =& |G|-t-\frac{M+1}{t}+2.
    \end{align*}
    As $t\ge 2$, we have
        $\bigl(-t-\frac{M+1}{t}+2\bigr)-\bigl(-\frac{4}{3}t-\frac{M}{t}+\frac{5}{3}\bigr)=\frac{t^2+t-3}{3t}>0$.
    So we also have
    \begin{align*}
        |A\overset{\tau}{+} B|\ge |G|-\frac{4}{3}t-\frac{M}{t}+\frac{5}{3}
    \end{align*}
    in this case.

    So, in both cases, we have \eqref{bothcases}, as desired. Note the bound \eqref{bothcases}, considered as a function of a positive real variable $t\in \R^+$, is maximized for $t=\frac{\sqrt{3M}}{2}$.
    Choose $t=\Bigl\lceil\frac{\sqrt{3M}}{2}\Bigr\rceil\in [2,\, M]$. Then
    \begin{align}\label{ineqa}
        -\frac43t=-\frac{4}{3}\cdot\Biggl\lceil\frac{\sqrt{3M}}{2}\Biggr\rceil\ge -\frac{4}{3}\cdot\frac{\sqrt{3M}}{2}-\frac{4}{3},
    \end{align}
    and
    \begin{align}\label{ineqb}
        -\frac{M}{t}=-\frac{M}{\Bigl\lceil\frac{\sqrt{3M}}{2}
        \Bigr\rceil}\ge
         -\frac{M}{\frac{\sqrt{3M}}{2}}.
    \end{align}
    Thus, taking $t=\Bigl\lceil\frac{\sqrt{3M}}{2}\Bigr\rceil$ and combining \eqref{bothcases}, \eqref{ineqa}, and \eqref{ineqb}, we have
    \begin{align*}
        |A\overset{\tau}{+} B|&\ge
        |G| -\frac{4}{3}\cdot\frac{\sqrt{3M}}{2}-\frac{4}{3} -\frac{M}{\frac{\sqrt{3M}}{2}}+\frac53\\
       &=|G|+\frac{1-4\sqrt{3M}}{3}.
    \end{align*}
\end{proof}

\end{document}